\definecolor{bl}{RGB}{20,20,150}
\newtheorem{theorem}{Theorem}
\newtheorem{lemma}[theorem]{Lemma}
\newtheorem{proposition}[theorem]{Proposition}
\newtheorem{corollary}[theorem]{Corollary}
\theoremstyle{definition}
\newtheorem{example}[theorem]{Example}
\newtheorem{remark}[theorem]{Remark}
\newcommand{\Ecal}{\mathcal{E}}
\newcommand{\Mcal}{\mathcal{M}}
\newcommand{\Vcal}{\mathcal{V}}
\newcommand{\bfB}{{\bf B}}
\newcommand{\XX}{\mathbb{X}}
\newcommand{\YY}{\mathbb{Y}}
\newcommand{\RR}{\mathbb{R}}
\newcommand{\nv}{v}
\newcommand{\nh}{h}
\newcommand{\RBM}{\operatorname{RBM}}
\newcommand{\supp}{\operatorname{supp}}
\DeclareMathOperator*{\bigtimes}{\textnormal{\Large $\times$}} 
\newcommand{\keywords}[1]{\smallskip\newline\noindent{\bf Keywords:} #1}
\title{\sc Hierarchical Models as Marginals of Hierarchical Models}
\author{
{\small \bf Guido Mont\'ufar} \\ \small Max Planck Institute for \\ \small Mathematics in the Sciences\\ \small montufar@mis.mpg.de
\and 
{\small \bf Johannes Rauh} \\ \small Department of Mathematics and Statistics \\ \small York University \\ \small jarauh@yorku.ca
 }
\date{\small\today}
\begin{document}
\maketitle
\thispagestyle{empty}

\begin{abstract}%
We investigate the representation of hierarchical models in terms of marginals of other hierarchical models with smaller interactions. We focus on binary variables and marginals of pairwise interaction models whose hidden variables are conditionally independent given the visible variables. In this case the problem is equivalent to the representation of linear subspaces of polynomials by feedforward neural networks with soft-plus computational units. We show that every hidden variable can freely model multiple interactions among the visible variables, which allows us to generalize and improve previous results. In particular, we show that a restricted Boltzmann machine with less than $[ 2(\log(v)+1) / (v+1) ] 2^v-1$ hidden binary variables can approximate every distribution of $v$ visible binary variables arbitrarily well,  compared to $2^{\nv-1}-1$ from the best previously known result.  
\keywords{hierarchical model, restricted Boltzmann machine, interaction model, connectionism, graphical model}
\end{abstract}

\section{Introduction}

Consider a finite set $V$ of random variables. 
A hierarchical log-linear model is a set of joint probability distributions that can be written as products of interaction potentials, as $p(x) = \prod_{\Lambda} \psi_\Lambda(x)$, where $\psi_\Lambda(x) = \psi_\Lambda(x_\Lambda)$ only depends on the subset $\Lambda$ of variables and where the product runs over a fixed family of such subsets. 
By introducing hidden variables, it is possible to express the same probability distributions in terms of potentials which involve only small sets of variables, as $p(x) = \sum_{y} \prod_{\lambda}\psi_\lambda(x,y)$, with small sets~$\lambda$. 
Using small interactions is a central idea in the context of connectionistic models, where the sets $\lambda$ are often restricted to have cardinality two. 
Due to the simplicity of their local characteristics, these models are particularly well suited for Gibbs sampling~\cite{4767596}. 
The representation, or explanation, of complex interactions among observed variables in terms of hidden variables is also related to the study of common ancestors~\cite{e17042304}. 

We are interested in sufficient and necessary conditions on the number of hidden variables, their values, and the interaction structures, under which the visible marginals are flexible enough to represent any distribution from a given hierarchical model. 
Many problems can be formulated as special cases of this general problem. 
For example, the problem of calculating 
the smallest number of layers of variables that a deep Boltzmann machine needs in order to represent any probability distribution~\cite{montufar2015deep}. 

In this article, we focus on the case that all variables are binary. 
For the hierarchical models with hidden variables, we restrict our attention to models involving only pairwise interactions and whose hidden variables are conditionally independent given the visible variables (no direct interactions between the hidden variables). 
A prominent example of this type of models is the restricted Boltzmann machine, which has full bipartite interactions between the visible and hidden variables. 
The representational power of restricted Boltzmann machines has been studied assiduously; see, e.g.,~\cite{NIPS1991_535,Younes1996109,LeRoux:2008:RPR:1374176.1374187,montufar2011refinements}. 
The free energy function of such a model is a sum of soft-plus computational units $x\mapsto \log(1+\exp(\sum_{i\in V} w_i x_i + c))$. 
On the other hand, the energy function of a fully observable hierarchical model with binary variables is a polynomial, with monomials corresponding to pure interactions. 
Since any function of binary variables can be expressed as a polynomial, 
the task is then to characterize the polynomials computable by soft-plus units. 

Younes~\cite{Younes1996109} showed that a hierarchical model with $N$ binary variables and a total of $M$ pure higher order interactions (among three or more variables) can be represented as the visible marginal of a pairwise interaction model with $M$ hidden binary variables. 
In Younes' construction, each pure interaction is modeled by one hidden binary variable that interacts pairwise with each of the involved visible variables. 
In fact, he shows that this replacement can be accomplished without increasing the number of model parameters, by imposing linear constraints on the coupling strengths of the hidden variable. 
In this work we investigate ways of squeezing more degrees of freedom out of each hidden variable. An indication that this should be possible is the fact that the full interaction model, for which $M = 2^N - \binom{N}{2} - N - 1$, can be modeled by a pairwise interaction model with $2^{N-1}-1$ hidden variables~\cite{montufar2011refinements}. 
Indeed, by controlling groups of polynomial coefficients at the time, we show that in general less than $M$ hidden variables are sufficient. 

A special case of hierarchical models with hidden variables are mixtures of hierarchical models. 
The smallest mixtures of hierarchical models that contain other hierarchical models have been studied in~\cite{montufar2013mixture}. 
The approach followed there is different and complementary to our analysis of soft-plus polynomials. 
For the necessary conditions, the idea there is to compare the possible support sets of the limit distributions of both models. 
For the sufficient conditions, the idea is to find a small $S$-set covering of the set of elementary events. 
An $S$-set of a probability model is a set of elementary events such that every distribution supported in that set is a limit distribution from the model. 

Another type of hierarchical models with hidden variables are tree models. The geometry of binary tree models has been studied in~\cite{Piotr:tree} in terms of moments and cumulants. That analysis bears some relation to ours in that it also elaborates on M\"obius inversions. 

\medskip
This paper is organized as follows. 
Section~\ref{section:preliminaries} introduces hierarchical models and formalizes our problem in the light of previous results. 
Section~\ref{section:softplus} pursues a characterization of the polynomials that can be represented by soft-plus units. 
Section~\ref{section:RBMs} applies this characterization to study the representation of hierarchical models in terms of pairwise interaction models with hidden variables. This section addresses principally restricted Boltzmann machines. 
Section~\ref{section:conclusions} offers our conclusions and outlook.  

\section{Preliminaries}
\label{section:preliminaries}

This section introduces hierarchical models, with and without hidden variables, formalizes the problem that we address in this paper, and presents motivating prior results. 

\subsection{Hierarchical Models}

Consider a finite set $V$ of variables with finitely many joint states $x =(x_i)_{i\in V}\in \XX =\bigtimes_{i\in V}\XX_i$. 
We write $\nv = |V|$ for the cardinality of $V$. 
For a given set $S\subseteq 2^V$ of subsets of $V$ let 
\begin{equation*}
\Vcal_{\XX,S} := \left\{ g(x) = \sum_{\Lambda\in S} g_\Lambda(x) \colon  g_\Lambda(x) = g_\Lambda(x_\Lambda)\right\}. 
\end{equation*} 
This is the linear subspace of $\RR^{\XX}$ spanned by functions $g_\Lambda$ that only depend on sets of variables $\Lambda\in S$. 
The hierarchical model of probability distributions on $\XX$ with interactions $S$ is the set 
\begin{equation}
\Ecal_{\XX,S} := \left\{ p(x) = \frac{1}{Z(g)}\exp(g(x)) \colon g\in\Vcal_{\XX,S} \right\}, 
\label{eq:hierarchical}
\end{equation} 
where $Z(g) = \sum_{x'\in\XX}\exp(g(x'))$ is a normalizing factor. 
We call
\begin{equation}
E(x) = g(x) = \sum_{\Lambda\in S} g_\Lambda(x)
\label{eq:1}
\end{equation}
the \emph{energy function} of the corresponding probability distribution.

For convenience, in all what follows we assume that $S$ is a simplicial complex, meaning that $A\in S$ implies $B\in S$ for all $B\subseteq A$. 
Furthermore, we assume that the union of elements of $S$ equals $V$. 
In the case of binary variables, $\XX_i=\{0,1\}$ for all $i\in V$, the energy can be written as a polynomial, as
\begin{equation*}
E(x) = \sum_{\Lambda\in S} J_\Lambda \prod_{i\in\Lambda}x_i. 
\end{equation*}
Here, $J_\Lambda\in \RR$, $\Lambda\in S$, are the interaction weights that parametrize the model.

\subsection{Hierarchical Models with Hidden Variables}

Consider an additional set $H$ of variables with finitely many joint states $y = (y_j)_{j\in H} \in \YY=\bigtimes_{j\in H} \YY_j$. 
We write $\nh = |H|$ for the cardinality of $H$. 
For a simplicial complex $T\subseteq 2^{V\cup H}$, let $\Vcal_{\XX\times\YY,T} \subseteq\RR^{\XX \times \YY}$ be the linear subspace of functions of the form 
$g(x,y) = \sum_{\lambda\in T} g_\lambda(x,y)$,  $g_\lambda(x,y) = g_\lambda((x,y)_\lambda)$. 
The marginal on $\XX$ of the hierarchical model $\Ecal_{\XX\times\YY, T}$ is the set
\begin{equation}
\Mcal_{\XX\times\YY, T} :=\left\{ p(x) = \frac{1}{Z(g)}\sum_{y\in\YY } \exp(g(x,y)) \colon g\in\Vcal_{\XX\times\YY,T}\right\}, 
	\label{eq:marginal}
\end{equation}
where $Z(g)= \sum_{x'\in\XX,y'\in\YY}\exp(g(x',y'))$ is again a normalizing factor. 
The free energy of a probability distribution from $\Mcal_{\XX\times\YY,T}$ is given by 
\begin{equation}
F(x) 
= \log \sum_{y\in \YY} \exp\Big(\sum_{\lambda\in T} g_\lambda(x,y) \Big). 
\label{eq:2}
\end{equation}
Here and throughout ``$\log$'' denotes the natural logarithm. 

In the case of binary visible variables, $\XX_i=\{0,1\}$ for all $i\in V$, 
the free energy~\eqref{eq:2} can be written as a polynomial, as
\begin{equation*}
F(x) = \sum_{B \subseteq V} K_B \prod_{i\in B} x_i , 
\end{equation*}
where the coefficients can be computed from M\"obius' inversion formula as
\begin{equation}
K_B = \sum_{C\subseteq B} (-1)^{|B\setminus C|} \log \sum_{y\in\YY} \exp \Big(\sum_{\lambda\in T }  g_\lambda((1_{C},0_{V\setminus C}),y) \Big), \quad B\in 2^V. 
\label{eq:moebius}
\end{equation}
Here $(1_C,0_{V\setminus C})\in\{0,1\}^V$ is the vector with value $1$ in the entries $i\in C$ and value $0$ in the entries $i\not\in C$. 

If there are no direct interactions between hidden variables, i.e.,~if $|\lambda\cap H|\le 1$ for all $\lambda\in T$, then the sum over $y$ factorizes and the free energy~\eqref{eq:2} can be written as
\begin{equation}
  \label{eq:independent-hidden-variables}
  F(x) = \sum_{\lambda:\lambda\cap H=\emptyset} g_{\lambda}(x)
  + \sum_{j\in H}\log \sum_{y_{j}\in\YY_{j}}\exp\Big(\sum_{\lambda\in T: j\in\lambda} g_{\lambda}(x,y_{j})\Big).
\end{equation}
Particularly interesting are the models with full bipartite interactions between the set of visible variables and the set of hidden variables, i.e., models with $T=\{\lambda\subseteq V \cup H \colon |\lambda\cap V| \leq 1, |\lambda\cap H|\leq 1\}$, called restricted Boltzmann machines (with discrete variables). 

\subsection{Problem and Previous Results}
\label{sec:problem}

In general the marginal of a hierarchical model is not a hierarchical model. 
However, one may ask which hierarchical models are contained in the marginal of another hierarchical model. 

To represent a hierarchical model in terms of the marginal of another hierarchical model, 
we need to represent~\eqref{eq:hierarchical} in terms of~\eqref{eq:marginal}. 
Equivalently, we need to represent all possible energy functions in terms of free energies. 
Given a set of visible variables $V$ and a simplicial complex $S\subseteq 2^V$, 
what conditions on the set of hidden variables $H$ and the simplicial complex $T\subseteq 2^{V\cup H}$ are sufficient and necessary in order for any function $E$ of the form~\eqref{eq:1} to be representable in terms of some function $F$ of the form~\eqref{eq:2}? 
We would like to arrive at a result that generalizes the following. 

\begin{itemize}
	\item 
	A restricted Boltzmann machine with $\nh$ hidden binary variables can approximate any probability distribution from a binary hierarchical model $\Ecal_{S}$ with $|\{\Lambda\in S\colon |\Lambda|>1\}| \leq \nh$ arbitrarily well~\cite{Younes1996109}. 
	\item 
	The restricted Boltzmann machine with $\nh = 2^{\nv-1}-1$ hidden binary variables can approximate any probability distribution of $\nv$ binary variables arbitrarily well~\cite{montufar2011refinements}. 
\end{itemize}
Our Theorem~\ref{theorem2a} in Section~\ref{section:RBMs} improves and generalizes these statements. 
The basis of this result are soft-plus polynomials, which we discuss in the following section. 

\section{Soft-plus Polynomials}
\label{section:softplus}

Consider a function of the form 
\begin{equation}
\phi\colon \{0,1\}^V\to \RR;\;  x\mapsto \log(1 + \exp(w^\top x + c)),
\label{eq:softplus}
\end{equation} 
parametrized by $w=(w_i)_{i\in V}\in \RR^V$ and $c\in \RR$. 
We regard $\phi$ as a \emph{soft-plus computational unit}, which integrates each input vector $x\in\{0,1\}^V$ into a scalar via $x\mapsto w^\top x +c$ and applies the soft-plus non-linearity $f\colon \mathbb{R}\to\mathbb{R}_+;\;s\mapsto \log(1+ \exp(s))$. 
See Figure~\ref{figure:softplus} for an illustration of this function. 
In view of Equation~\eqref{eq:independent-hidden-variables}, the function $\phi$ corresponds to the free energy added by one hidden binary variable interacting pairwise with $V$ visible binary variables. The parameters $w_i$, $i\in V$, correspond to the pair interaction weights and $c$ to the bias of the hidden variable. 

\begin{figure}
	\centering
	\setlength{\unitlength}{1cm}
	\begin{tabular}{cc}	
	\begin{minipage}{.175\textwidth}
	\includegraphics{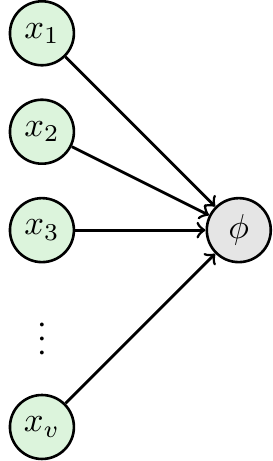}
	\end{minipage}
	&
	\begin{minipage}{.45\textwidth}
	\includegraphics[scale=1]{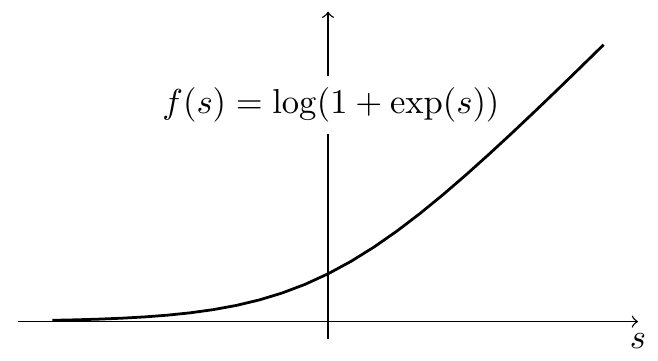}
	\end{minipage}
	\end{tabular}
	\caption{Illustration of a soft-plus computational unit. The possible inputs $\XX=\{0,1\}^V$, corresponding to the vertices of the unit $V$-cube, are mapped to the real line by an affine map $x\mapsto w^\top x + c$, and then the soft-plus non-linearity $f\colon s\mapsto\log(1+\exp(s))$ is applied. }
	\label{figure:softplus}
\end{figure}

What kinds of polynomials on $\{0,1\}^V$ can be represented by soft-plus units?  
Following Equation~\eqref{eq:moebius}, the polynomial coefficients of $\phi$ are given by 
\begin{equation}
K_{B} (w,c) 
= \sum_{C\subseteq B} (-1)^{|B \setminus C|} 
\log\left(1 + \exp \Big( \sum_{i\in C} w_{i}  + c \Big)\right),\quad B \in 2^V. 
\label{eq:coeffs}
\end{equation}
For each $B\in 2^V$ this is an alternating sum of the values $\phi(x)$ of the soft-plus unit on the input vectors $x\in\{0,1\}^V$ with $\supp(x)\subseteq B$. 
In particular, $K_B$ is independent of the parameters $w_i$, $i\not\in B$. 
We will use the shorthand notation $w_B$ for $(w_i)_{i\in B}$.  

Note that, if $w_i=0$ for some $i\in V$, then $K_C=0$ for all $C\in 2^V$ with $i\in C$. 
In the following we focus on the description of the possible values of the highest degree coefficients. 
For example, Younes~\cite{Younes1996109} showed that a soft-plus unit can represent a polynomial with an arbitrary leading coefficient: 

\begin{lemma}[Lemma~1 in~\cite{Younes1996109}]
\label{lem:Younes}
Let $B\subseteq V$ and $w_i=0$ for $i\not\in B$. 
Then, for any $J_B\in \RR$, there is a choice of $w_B\in \RR^B$ and $c\in \RR$ such that $K_B = J_B$. 
\end{lemma}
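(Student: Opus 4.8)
The plan is to analyze the coefficient formula~\eqref{eq:coeffs} for $K_B$ in the limiting regime where the affine form $w^\top x + c$ is large, so that the soft-plus $\log(1+\exp(s))$ behaves essentially like the linear function $s$. Since we have the freedom to set $w_i = 0$ for $i \notin B$, we only need to worry about $w_B$ and $c$. First I would substitute $w_B = t u$ for a fixed direction $u \in \RR^B$ and a scalar $t > 0$, and examine $K_B(tu, c)$ as $t \to \infty$ (adjusting $c$ as needed). The key observation is that $\log(1+\exp(s)) = s + \log(1+\exp(-s)) \to s$ as $s \to +\infty$, and the correction term decays exponentially; meanwhile, whenever $s \to -\infty$, $\log(1+\exp(s)) \to 0$ exponentially fast. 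So in an appropriate limit each term $\log(1+\exp(\sum_{i\in C} w_i + c))$ is approximated either by the affine form $\sum_{i\in C} w_i + c$ (up to exponentially small error) or by $0$ (up to exponentially small error).

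The crucial structural fact is that the highest-degree Möbius coefficient of \emph{any} affine function $x \mapsto a^\top x + b$ vanishes as soon as $|B| \geq 2$: an affine function has no pure interactions beyond degree $1$. Concretely, $\sum_{C \subseteq B} (-1)^{|B \setminus C|} (\sum_{i \in C} w_i + c) = 0$ when $|B| \geq 2$. Therefore, to get a nonzero $K_B$, I must arrange that the arguments $\sum_{i \in C} w_i + c$ do \emph{not} all land in the ``linear'' regime simultaneously — some subsets $C$ should give large positive arguments (contributing their affine value) while others give large negative arguments (contributing essentially $0$). By choosing, say, all $w_i$ equal to a large value $t$ and $c$ a suitable negative value like $-(k - \tfrac12)t$ for an integer threshold $k$, the sign of $\sum_{i \in C} w_i + c \approx (|C| - k + \tfrac12)t$ is governed purely by whether $|C| \geq k$ or $|C| < k$. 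Then $K_B$ becomes, in the limit, $\sum_{C \subseteq B, |C| \geq k} (-1)^{|B \setminus C|} (|C| - k)t$ plus lower-order and exponentially small terms; one computes this combinatorial sum explicitly (it is a partial alternating sum of binomial-type coefficients) and checks it is nonzero for a good choice of $k$, e.g. $k = |B|$ gives exactly the single term $C = B$, contributing $0 \cdot t$ — so one wants $k$ slightly smaller, and a short calculation shows a suitable $k$ yields a nonzero multiple of $t$. Scaling $t$ then lets $K_B$ run over all of $\RR$ (or at least a half-line; combining with a sign flip $w_B \mapsto -w_B$ and reindexing, or using a reflection symmetry, covers both signs), with a final continuity/intermediate-value argument to hit every real value exactly rather than just asymptotically.

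Alternatively — and this is probably cleaner to write — I would avoid the asymptotic bookkeeping by exploiting a homogeneity/scaling trick directly: note that $\phi(x) = \log(1 + \exp(w^\top x + c))$ and consider the rescaled family $\phi_t(x) = \tfrac{1}{t}\log(1 + \exp(t(w^\top x + c)))$, which as $t \to \infty$ converges pointwise to the ``tropical'' function $\max(0, w^\top x + c) = (w^\top x + c)_+$. The Möbius coefficient $K_B$ depends continuously on the finitely many values $\phi(x)$, $\supp(x) \subseteq B$, so it suffices to show the piecewise-linear function $x \mapsto (w^\top x + c)_+$ can have an arbitrary nonzero degree-$|B|$ coefficient, and then transfer this back to soft-plus by continuity and rescaling (the factor $\tfrac 1t$ out front only rescales $K_B$, which is harmless). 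For the tropical function, choosing $w = (1,\dots,1)$ and $c = -k + \tfrac12$ makes $(w^\top x + c)_+ = (\,|\supp(x)| - k + \tfrac12\,)_+$, a function only of $|\supp(x)|$, and its top Möbius coefficient is a concrete integer-valued sum over $C \subseteq B$ that one shows is nonzero for an appropriate $k$.

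The main obstacle I anticipate is the combinatorial identity at the heart of the argument: verifying that the alternating sum $\sum_{C \subseteq B} (-1)^{|B \setminus C|} (\,|C| - k + \tfrac12\,)_+$ is nonzero for some choice of $k \in \{1, \dots, |B|\}$, and ideally giving a clean closed form (it should reduce to something like $\pm\binom{|B|-2}{k-2}$ or a similar binomial coefficient after telescoping the positive part against the alternating signs). Once that identity is in hand, the limiting/continuity arguments are routine: the soft-plus values converge to the tropical values uniformly on the finite input set under the rescaling, $K_B$ is a fixed linear functional of those values, so $K_B$ for soft-plus can be made as close as desired to (a scalar multiple of) the nonzero tropical value, and then scaling $w_B, c$ together with an intermediate-value argument pins down any target $J_B \in \RR$ exactly.
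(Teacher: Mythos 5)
Your proposal is correct and follows essentially the same route as the paper, which attributes the argument to Younes (``choose all non-zero $w_i$ of equal magnitude'') and uses the identical device in the proof of Lemma~\ref{generallemma}: set all weights in $B$ equal to a large $\omega$ with $c=-(|B|-\tfrac12)\omega$ so that the soft-plus is approximately zero except when $C\supseteq B$, then scale and use a reflection symmetry plus continuity to reach every $J_B\in\RR$. One small slip: with your threshold $k=|B|$ the surviving term is $(|B|-k+\tfrac12)t=t/2$, not $0\cdot t$, so $k=|B|$ already works and no smaller $k$ is needed.
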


The idea of Younes' proof of Lemma~\ref{lem:Younes} is to choose all non-zero $w_{i}$ of equal magnitude. This simplifies the calculations and reduces the number of free parameters to one. 
Our goal is to show that a soft-plus unit can actually freely model several polynomial coefficients at the same time. 
Our approach to simplify the M\"obius inversion formula~\eqref{eq:coeffs} is to choose the parameters $w$ and $c$ in such a way that the function $\phi$ has many zeros. 
Clearly this can only be done in an approximate way, since the soft-plus function is strictly positive. 
Nevertheless, these approximations can be made arbitrarily accurate, since $\log(1 + \exp(s)) \leq \exp(s)$ is arbitrarily close to zero for sufficiently large negative values of $s$. 

We call a pair of sets $(B,B')$ an \emph{edge pair} or a \emph{covering pair} when $B\supsetneq B'$ and there is no set $C$ with $B\supsetneq C\supsetneq B'$. 
The next lemma shows that a soft-plus unit can jointly model the coefficients of an edge pair, at least in part. 
When the maximum degree $|B|$ is at most~3, the two coefficients are restricted by an inequality, but when $|B|\geq 4$, there are no such restrictions. The result is illustrated in Figure~\ref{figure:region}. 

\begin{figure} 
\centering
\setlength{\unitlength}{1cm}
\begin{tabular}{ccc}
\begin{minipage}{.275\textwidth}
\includegraphics[scale=.75]{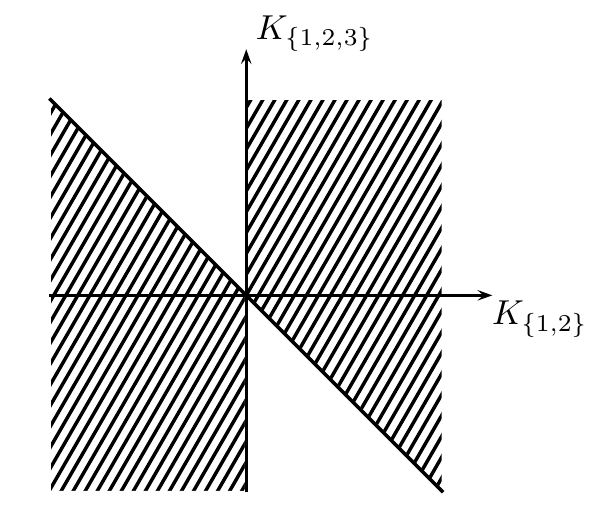}
\includegraphics[scale=.75]{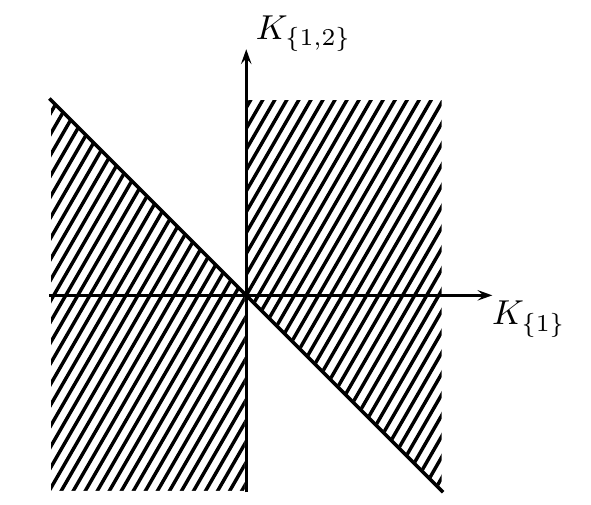}
\end{minipage}
&
\begin{minipage}{.35\textwidth}
\bigskip
\includegraphics[scale=.65]{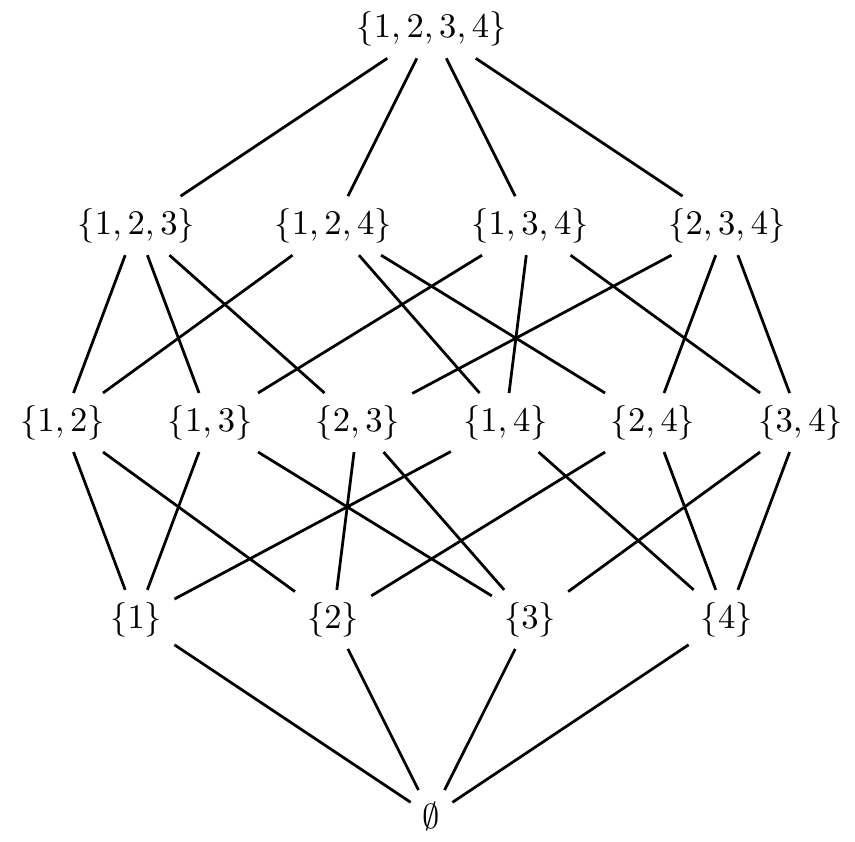}
\end{minipage}
&
\begin{minipage}{.275\textwidth}
\includegraphics[scale=.75]{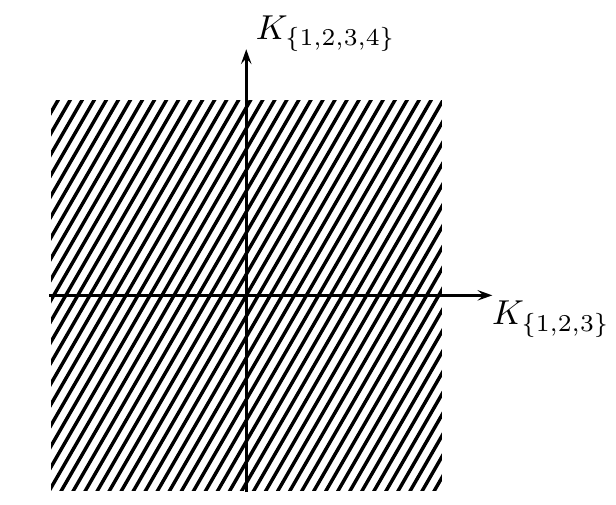}
\includegraphics[scale=.75]{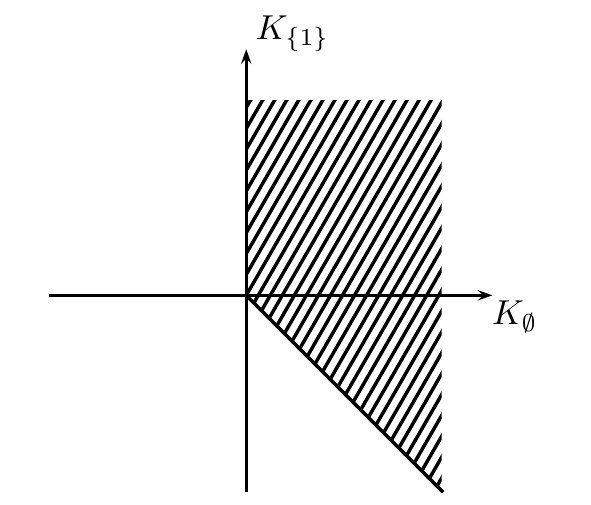}
\end{minipage}    
\end{tabular}
\caption{Illustration of Lemma~\ref{proposition:region}. 
Depicted is for each edge pair $(B,B')$ the set of coefficient pairs $(K_{B},K_{B'}) \in \RR^2$ of the polynomials $\sum_{C\subseteq V} K_C \prod_{i\in C} x_i$ expressible as $\log(1 + \exp(w^\top x + c))$. 
Shown is also the set of monomials of partial degree one and degree at most $4$, partially ordered by variable inclusion. 
}
\label{figure:region} 
\end{figure}

\begin{lemma}
\label{proposition:region}
	Consider an edge pair $(B, B')$. 
	Depending on $|B|$, for any $\epsilon>0$ there is a choice of $w_B\in\mathbb{R}^B$ and $c\in\mathbb{R}$ such that $\|(K_B, K_{B'}) - (J_B,J_{B'})\| \leq \epsilon$ 
	if and only~if
\begin{equation*}
\newcolumntype{R}{@{\extracolsep{.5cm}}r@{\extracolsep{.25cm}}}%
\renewcommand{\arraystretch}{1.2}
\begin{array}{cRl} 
		J_{B'}\geq 0,-J_B, 										 &\text{for}& |B|=1 \\
		J_{B'}\geq 0,-J_B \quad\text{or}\quad J_{B'}\leq 0,-J_B, &\text{for}& |B|=2 \\
		J_{B'}\geq 0,-J_B \quad\text{or}\quad J_{B'}\leq 0,-J_B, &\text{for}& |B|=3 \\ 
		(J_{B}, J_{B'})\in\RR^2, 								 &\text{for}& |B|\geq 4. 
\end{array}
\end{equation*}	 
\end{lemma}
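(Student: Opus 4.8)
The plan is to reduce everything to a one-parameter family by using Younes' trick of taking all nonzero $w_i$ equal, and then to compute $(K_B,K_{B'})$ explicitly as a curve traced out as that common parameter and $c$ vary. Write $B' = B\setminus\{k\}$ for the unique element $k\in B\setminus B'$ (that $B\setminus B'$ is a singleton is exactly the edge-pair condition), set $m = |B|$, and impose $w_i = t$ for all $i\in B$ and $w_i = 0$ otherwise; the free parameters are then $t\in\RR$ and $c\in\RR$. Using~\eqref{eq:coeffs}, the value $\log(1+\exp(\sum_{i\in C}w_i + c))$ depends on $C\subseteq B$ only through $|C|$, so $K_B = \sum_{j=0}^{m}(-1)^{m-j}\binom{m}{j} f(jt + c)$ and $K_{B'} = \sum_{j=0}^{m-1}(-1)^{m-1-j}\binom{m-1}{j} f(jt+c)$, where $f(s) = \log(1+\exp(s))$. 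The two expressions are related by a discrete difference in the top slice. It is convenient to reparametrize by $a = c$ and $b = (m-1)t + c$, the ``lowest'' and ``second-highest'' arguments appearing, and to push the remaining arguments to $-\infty$ so that only two values of $f$ survive up to the $\epsilon$-approximation.

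The core of the argument is an \emph{asymptotic/limiting analysis}. First I would show the ``if'' direction constructively: send the interior arguments $jt+c$ for $0<j<m$ off to $-\infty$ (where $f\le \exp(s)\to 0$) while keeping one or two of the extreme arguments controlled, and read off which pairs $(K_B,K_{B'})$ are approachable. Concretely, for $m\ge 4$ there are at least four points in the Boolean lattice below $B$ that can be manipulated independently enough — one drives $f$ near $0$ almost everywhere and the leading and sub-leading alternating sums can be made to take any pair of real values; Younes' Lemma~\ref{lem:Younes} already gives that $K_B$ is unconstrained, and the extra freedom for $K_{B'}$ comes from the fact that $B'$ has an ``extra'' low-order vertex not shared with the top slice of $B$. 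For $m\le 3$ the number of available extreme points is too small: after driving the interior to zero one is left with an expression of the form $K_B \approx \pm(\text{increment of }f)$ and $K_{B'}\approx \pm f(\text{something})$ sharing the \emph{same} sign pattern, and since $f>0$ one gets $K_{B'}\ge 0$ forced together with the sign of $K_B + K_{B'}$, which is exactly the stated alternative $J_{B'}\ge 0,-J_B$ or (for $m=2,3$) its mirror image obtained by flipping the sign of $t$.

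For the ``only if'' direction I would argue that the full (not one-parameter-restricted) family of achievable $(K_B,K_{B'})$ is the closure of a set cut out by these same inequalities, by analyzing the signs of the alternating sums directly from convexity/monotonicity of $f$: $f$ is increasing and convex, so iterated forward differences of $f$ of order $\ge 1$ are positive, which pins down $K_{B'}\ge 0$ in the low-dimensional cases regardless of how the $w_i$ are chosen (not just the equal ones), and a similar finite-difference sign computation bounds $K_B+K_{B'}$. The main obstacle I expect is the ``only if'' part for $m=2$ and $m=3$: showing that \emph{no} choice of $w_B\in\RR^B$, $c\in\RR$ — including the asymmetric ones — can escape the claimed region. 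This requires a genuine inequality on the alternating sums~\eqref{eq:coeffs} of $f$ over the sub-cube, presumably proved by writing $K_B$ and $K_{B'}$ as mixed partial finite differences of the convex function $f$ along the edges of the cube and invoking total positivity / the sign-regularity of such differences for $f(s)=\log(1+e^s)$. The cases $m=1$ and $m\ge4$ should be comparatively routine: $m=1$ is a direct two-point computation, and $m\ge4$ follows by combining Lemma~\ref{lem:Younes} with one additional degree of freedom, so the write-up can dispatch those quickly and concentrate on $m\in\{2,3\}$.
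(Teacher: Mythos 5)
Your overall strategy (drive most values of $\phi$ to zero, use convexity for the constraints) matches the spirit of the paper's proof, but the reduction you start from breaks the argument in the hardest case. By setting $w_i=t$ for \emph{all} $i\in B$, including the distinguished element $k$ with $B'=B\setminus\{k\}$, you collapse to a two-parameter family $(t,c)$, and then both $K_B=\sum_{j}(-1)^{m-j}\binom{m}{j}f(jt+c)$ and $K_{B'}$ are iterated differences of $f$ along the \emph{same} arithmetic progression. There is no argument that this two-parameter image is dense in $\RR^2$ for $m\ge 4$, and it almost certainly is not: in every asymptotic regime (only the top one or two arguments $jt+c$ in the active region of $f$, the rest far negative) one finds $K_B$ and $K_{B'}$ tied together, e.g.\ $K_{B'}\approx f(s)\ge 0$ and $K_B\approx s+t-4f(s)$ with $t\gg 0$, so you cannot reach, say, $K_{B'}$ moderate and $K_B$ large negative. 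Your proposed fix --- ``keep $a=c$ and $b=(m-1)t+c$ finite and push the remaining arguments to $-\infty$'' --- is internally inconsistent, since for $t>0$ the remaining arguments $jt+c$, $0<j<m-1$, all lie \emph{above} $c$; they cannot go to $-\infty$ while $c$ stays finite. The paper's proof of the $|B|=4$ case keeps three effective parameters: $w_{B'}=\omega 1_{B'}$ with $\omega$ large, $c$ tuned so that $K_{B'}(w_{B'},c)\approx J_{B'}$ (only $3\omega+c$ active), and $w_k$ \emph{separate}; it then shows via the intermediate value theorem that $w_k\mapsto K_{B'}(w_{B'},c+w_k)$ sweeps an interval $[-\omega/5,\omega/5]$ of arbitrarily large length, so $K_B=K_{B'}(w_{B'},c+w_k)-K_{B'}(w_{B'},c)$ is unconstrained. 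Some such extra degree of freedom in the $k$-direction is essential; Younes' Lemma~\ref{lem:Younes} only controls one coefficient and does not by itself give the joint control.

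The ``only if'' direction also has an error as stated: it is false that convexity ``pins down $K_{B'}\ge 0$ regardless of how the $w_i$ are chosen'' for $|B|\in\{2,3\}$ --- for $|B|=2$, $K_{B'}=f(w_{B'}+c)-f(c)$ is negative whenever $w_{B'}<0$, and for $|B|=3$, $K_{B'}$ is negative exactly when $w_1w_2<0$; this is precisely why the lemma has the mirrored alternative $J_{B'}\le 0,-J_B$. The fact you actually need is a \emph{sign coherence}: writing $K_B(w_B,c)=K_{B'}(w_{B'},c+w_k)-K_{B'}(w_{B'},c)$, convexity of $f$ shows that $K_{B'}(w_{B'},c)$ and $K_{B'}(w_{B'},c+w_k)$ always have the same sign (determined by $\operatorname{sign}(w_{B'})$ for $|B|=2$ and by $\operatorname{sign}(w_1w_2)$ for $|B|=3$, independently of $c$), from which $K_B\ge -K_{B'}$ or $K_B\le -K_{B'}$ follows according to that common sign. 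Your appeal to positivity of all higher-order differences of $f$ is also not available: $f'''$ changes sign, which is exactly why no such constraint survives for $|B|\ge 4$. So the skeleton is right, but the $|B|\ge 4$ sufficiency and the $|B|\in\{2,3\}$ necessity both need to be reworked along these lines.
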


\begin{proof}
This proof is deferred to Appendix~\ref{sec:proofs}. 
\end{proof}

\begin{remark}
If $(B,B')$ is an edge pair with $|B|=3$, then, despite having $|B|+1=4$ parameters to vary ($w_i$, $i\in B$, and $c$), we can only determine the polynomial coefficients $K_B$ and $K_{B'}$ up to a certain inequality. 
We expect that the same is true in general: If we want to freely control $k$ polynomial coefficients, we need strictly more than $k$ parameters. Otherwise, the coefficients are restricted by some inequalities. 
This situation is common in models with hidden variables. In particular, mixture models often require many more parameters to eliminate such inequalities than expected from na\"ive parameter counting~\cite{montufar2013mixture}. 
\end{remark}

It is natural to ask whether it is possible to control other pairs of coefficients or even larger groups of coefficients. 
We discuss a simple example before proceeding with the analysis of this problem. 

\begin{example}
\label{exampleV2}
Consider a soft-plus unit with two binary inputs. 
Write $f\colon s\mapsto\log(1+\exp(s))$ for the soft-plus non-linearity and $f_0 = f(c)$, $f_1=f(w_1+c)$, $f_2=f(w_2+c)$, $f_{12}=f(w_1+w_2+c)$ for the values of the soft-plus unit on $\{0,1\}^2$. 
From Equation~\eqref{eq:coeffs} it is easy to see that 
\begin{equation*}
\newcolumntype{Z}{@{\extracolsep{.1cm}}r@{\extracolsep{.1cm}}}%
\renewcommand{\arraystretch}{1.2}
\begin{array}{lZlZl}
K_\emptyset &=& f_0  	  &\geq& 0\\
K_{\{1\}} 	&=& f_1 - f_0 &\geq& -K_\emptyset\\
K_{\{2\}} 	&=& f_2 - f_0 &\geq& -K_\emptyset . 
\end{array}
\end{equation*}
Now let us investigate the quadratic coefficient $K_{\{1,2\}}=f_{12}-f_1-f_2+f_0$. Using the convexity of $f$ we find   
\begin{equation*}
\newcolumntype{R}{@{\extracolsep{.5cm}}r@{\extracolsep{.25cm}}}%
\newcolumntype{Z}{@{\extracolsep{.1cm}}r@{\extracolsep{.1cm}}}%
\renewcommand{\arraystretch}{1.2}
\begin{array}{rZcZl R rZl}
		 0&\leq&K_{\{1,2\}},& & 						  & \text{if}&    K_{\{1\}},K_{\{2\}}&\geq& 0 \\
		 0&\leq&K_{\{1,2\}} &\leq& -K_{\{1\}}, -K_{\{2\}}, & \text{if}& -K_{\{1\}}, -K_{\{2\}}&\geq& 0 \\
-K_{\{1\}}&\leq&K_{\{1,2\}} &\leq& 0, 			   		  & \text{if}&  K_{\{1\}}, -K_{\{2\}}&\geq& 0 \\
-K_{\{2\}}&\leq&K_{\{1,2\}} &\leq& 0, 			   		  & \text{if}& -K_{\{1\}},  K_{\{2\}}&\geq& 0 . 
\end{array}
\end{equation*}
Hence the computable polynomials have coefficient triples $(K_{\{1\}}, K_{\{2\}}, K_{\{1,2\}})$ enclosed in a polyhedral region of $\mathbb{R}^3$ as depicted in Figure~\ref{fig:V2example}. 
However, any pair $(K_{\{1\}},K_{\{2\}})\in\mathbb{R}^2$ is possible (for $K_{\emptyset}$ large enough). 

\begin{figure}
\centering
\includegraphics[clip=true,trim=0cm .5cm 0cm 1cm,scale=1]{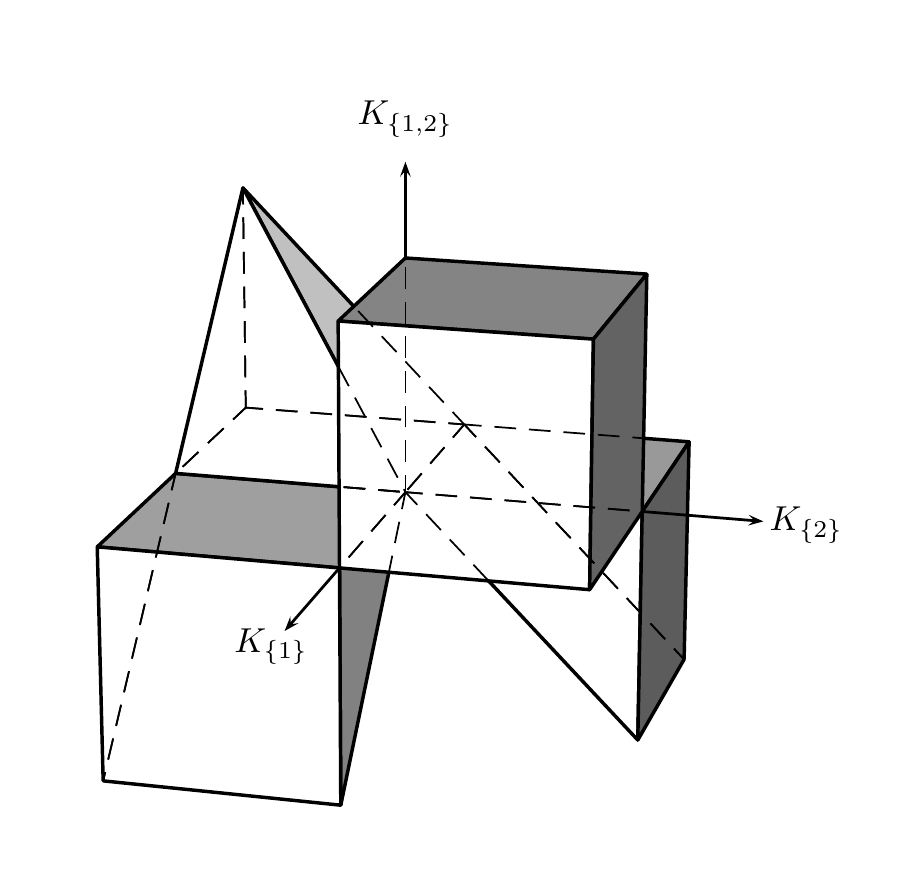}
\caption{Illustration of Example~\ref{exampleV2}. 
Depicted is a region of $\mathbb{R}^3$, clipped to $[-1,1]^3$, which contains the coefficient triples $(K_{\{1\}},K_{\{2\}},K_{\{1,2\}})\in\mathbb{R}^3$ of the polynomials computable by a soft-plus unit with two binary inputs. 
This region consists of $4$ solid convex cones. }
\label{fig:V2example}
\end{figure}
\end{example}

The next lemma 
shows that a soft-plus unit can jointly model certain tuples of polynomial coefficients corresponding to $\nv-k+1$ monomials of degree $k$. 
We call \emph{star tuple} a set of the form $\{B\cup\{j\} \colon j\in B'\}$, where $B,B'\subseteq V$ satisfy $B\cap B'=\emptyset$. 
Each element of the star tuple covers the set $B$. 
In the Hasse diagram of the power set $2^V$, the sets $B\cup\{j\}, j\in B'$, are the leaves of a star with root $B$. 

\begin{lemma}
\label{generallemma}
Consider any $B,B'\subseteq V$ with $B\cap B'=\emptyset$. 
Let $w_i=0$ for $i\not\in B\cup B'$. 
Then, for any $J_{B\cup\{j\}}\in\mathbb{R}$, $j\in B'$, and $\epsilon>0$, there is a choice of $w_{B\cup B'}\in\mathbb{R}^{B\cup B'}$ and $c\in\mathbb{R}$ such that $|K_{B\cup\{j\}} - J_{B\cup\{j\}} |\leq \epsilon$ for all $j\in B'$, and $|K_C|\leq \epsilon$ for all $C\neq B, B \cup\{j\}$, $j\in B'$. 
\end{lemma}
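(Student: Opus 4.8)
The plan is to generalize the strategy behind Lemma~\ref{proposition:region}: choose the weights $w$ and the bias $c$ so that the soft-plus unit is, up to arbitrarily small error, supported on a very restricted set of inputs, and then read off the M\"obius coefficients~\eqref{eq:coeffs} on that support. Write $f(s) = \log(1+\exp(s))$. Since $0 \le f(s) \le \exp(s)$ and $f(s) - s \le \exp(-s)$, by driving arguments to $-\infty$ we can make $f$ negligible, and by driving arguments to $+\infty$ we can force $f(s) \approx s$ to be essentially linear. The idea is to use the weights $w_i$ for $i\in B$ to ``select'' the block $B$ and the weights $w_j$, $j\in B'$, to independently tune the $|B'|$ target coefficients $J_{B\cup\{j\}}$.

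Concretely, I would fix a large parameter $t>0$ and set $w_i = t$ for each $i\in B$ (so that $\sum_{i\in B} w_i = |B|t$ is large), and choose the bias $c$ together with the leaf-weights $w_j$, $j\in B'$, of moderate size so that the argument $\sum_{i\in C} w_i + c$ of $f$ is large and positive precisely when $C\supseteq B$, and is large and negative otherwise. For inputs $x$ with $\supp(x) = B\cup D$, $D\subseteq B'$, we then have $f(\sum_{i\in B} w_i + \sum_{j\in D} w_j + c) \approx |B|t + c + \sum_{j\in D} w_j$, which is \emph{affine} in the indicator variables $(x_j)_{j\in B'}$. Plugging this affine expression into the M\"obius formula~\eqref{eq:coeffs}: for $C = B$ the alternating sum of an affine function over the sub-cube $\{C' : C'\subseteq B\}$ kills everything except a controlled constant; for $C = B\cup\{j\}$ with $j\in B'$ the second difference of the affine function $\sum_{k} w_k (x_k)$ in the direction $j$ returns exactly $w_j$ (plus negligible error from the inputs not containing $B$); and for any $C$ that either fails to contain $B$, or contains $B$ together with two or more elements of $B'$, the corresponding alternating sum of an affine function vanishes up to exponentially small error (in the first case because all the relevant $f$-values are negligibly small, in the second because the second-or-higher mixed difference of an affine function is zero). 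Hence $K_{B\cup\{j\}} \approx w_j$ and all other $K_C \approx 0$, so setting $w_j = J_{B\cup\{j\}}$ (and then taking $t$ large enough, depending on $\epsilon$ and on $\max_j |J_{B\cup\{j\}}|$) gives the claim. The coefficient $K_B$ itself is not constrained by the statement, which is consistent with the fact that, as in Lemma~\ref{proposition:region}, it absorbs the large constant $|B|t+c$.

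The main obstacle is making the linearization uniform: the error terms coming from the inputs with $\supp(x)\not\supseteq B$ must be controlled \emph{after} multiplication by the binomial-sized M\"obius coefficients and summed over all the $C$ for which we want $K_C\approx 0$, and simultaneously the linear approximation $f(s)\approx s$ on the ``active'' inputs must be good enough that the second differences reproduce the $w_j$ within $\epsilon$ even though $w_j$ ranges over a prescribed bounded set while $t\to\infty$. I would handle this by first fixing the $w_j=J_{B\cup\{j\}}$ and $c$ (say $c=0$), then choosing $t$ large as the last step, using the bounds $f(s)\le\exp(s)$ on the low side and $0\le f(s)-s\le\exp(-s)$ on the high side to get a single bound of the form $C_{B,B'}\exp(-t)$ (with a constant depending only on $|B|$, $|B'|$, and $\max_j|J_{B\cup\{j\}}|$) on all the unwanted perturbations at once; one subtlety is that the leaf-weights $w_j$ may be negative, so the argument on an active input could be as small as $|B|t + c - \sum_{j}|w_j|$, which is still $\to+\infty$, so one only needs $t$ large relative to $\sum_j |J_{B\cup\{j\}}|$. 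A clean way to organize the bookkeeping is to split $\phi = \phi_{\mathrm{active}} + \phi_{\mathrm{rest}}$ where $\phi_{\mathrm{active}}(x)$ equals the affine function $|B|t + c + \sum_{j\in B'} w_j x_j$ when $\supp(x)\supseteq B$ and $0$ otherwise; then the M\"obius coefficients of $\phi_{\mathrm{active}}$ are \emph{exactly} $K_B = $ (something) and $K_{B\cup\{j\}} = w_j$ and $K_C = 0$ otherwise, and $\|\phi - \phi_{\mathrm{active}}\|_\infty$ is exponentially small in $t$, which by linearity of~\eqref{eq:coeffs} transfers to all coefficients.
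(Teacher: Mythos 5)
Your proposal is correct and follows essentially the same route as the paper's proof: set $w_i=\omega$ for $i\in B$, $w_j=J_{B\cup\{j\}}$ for $j\in B'$, and choose $c$ so that the pre-activation $w^\top x+c$ is large and positive exactly on inputs with $\supp(x)\supseteq B$ (the paper takes $c=-(|B|-\tfrac12)\omega$), so that $f$ is negligible off that set and effectively affine on it, whence the M\"obius sums give $K_{B\cup\{j\}}\approx w_j$ and all other controlled coefficients $\approx 0$. One small correction: your parenthetical choice ``say $c=0$'' contradicts your own (correct) requirement that the pre-activation be large and negative whenever $C\not\supseteq B$ --- with $c=0$ and $B\neq\emptyset$ one gets, for instance, $K_{\{j\}}\approx f(J_{B\cup\{j\}})-\log 2\neq 0$ for $j\in B'$ --- so the bias must scale with $t$, e.g.\ $c\approx-(|B|-\tfrac12)t$ as in the paper.
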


\begin{proof}[Proof of Lemma~\ref{generallemma}]
Since $w_i=0$ for $i\not\in B\cup B'$, we have that $K_{C}=0$ for all $C\not\subseteq B\cup B'$. 
We choose $c=-(|B|-\frac12)\omega$, $w_i = \omega$ for all $i\in B$, and $w_j = J_{B\cup\{j\}}$ for $j\in B'$. 
Choosing $\omega\gg \sum_{j\in B'}|w_j|$ yields $f(\sum_{i\in C}w_i +c) \approx 0$ for all $C\not\supseteq B$. 
In this case, 
\begin{equation*}
K_C \approx 0, \quad \text{for all }B\not\subseteq C\subseteq B\cup B'. 
\end{equation*}
Furthermore, for all $j\in B'$ we have  
\begin{align*}
K_{B\cup\{j\}}
\approx& f\Big(\sum_{i\in B} w_i + w_j + c\Big) - f\Big(\sum_{i\in B} w_i + c\Big)\\ 
=& f(J_{B\cup\{j\}} + \tfrac12\omega) - f(\tfrac12\omega ) \\
\approx& (J_{B\cup\{j\}} + \tfrac12\omega) - (\tfrac12\omega) 
= J_{B\cup\{j\}}.  
\end{align*}
Similarly, $K_{B\cup C}\approx 0$ 
for all $C\subseteq B'$ with $|C|\geq 2$. 
Note that $K_B\approx \frac12\omega$. 
\end{proof}

The intuition behind Lemma~\ref{generallemma} is simple. 
When $\sum_{i\in B} w_i + c\gg 1$, 
the values $w^\top x +c$, for $x$ with $x_i=1$, $i\in B$, 
fall in a region where the soft-plus function is nearly linear. 
In turn, the soft-plus unit is nearly a linear function of $x_j$, $j\in B'$, with coefficients $w_j$, $j\in B'$.

\begin{remark}
Closely related to soft-plus units are \emph{rectified linear units}, which compute functions of the form 
\begin{equation*}
\varphi\colon \{0,1\}^V\to \mathbb{R};\; x\mapsto \max\{ 0, w^\top x +c \}. 
\end{equation*}
In this case the non-linearity is $s\mapsto\{0, s\}$. 
This reflects precisely the zero/linear behavior of the soft-plus activation for large negative or positive values of $s$. 
Our polynomial descriptions are based on this behavior and hence they apply both to soft-plus and rectified linear units. 
\end{remark}

We close this section with a brief discussion of dependencies among coefficients. 
The next proposition gives a perspective on the possible values of the coefficient $K_{B}$, depending on $w_m$, once $w_{B\setminus\{m\}}$ and $c$ have been fixed. 

\begin{proposition}
\label{proposition:1a}
Let $(B,B')$ be an edge pair with $B'=B\setminus\{m\}$ and let $J_B\in\mathbb{R}$. 
For fixed $w_{B'}\in \RR^{B'}$ and $c\in \RR$, 
there is some $w_m\in\mathbb{R}$ such that $K_{B}=J_B$ 
if and only if a certain 
degree-$2^{|B'|-1}$ polynomial in one real variable has a positive root. 
\end{proposition}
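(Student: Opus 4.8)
The plan is to fix $w_{B'}$ and $c$ and to treat $K_B = J_B$ as a single equation in the unknown $w_m$; after the substitution $t = \exp(w_m)$ this becomes a polynomial equation. First I would split the M\"obius sum~\eqref{eq:coeffs} defining $K_B$ according to whether the index set $C\subseteq B$ contains $m$ or not. Writing such subsets as $C'$ and $C'\cup\{m\}$ with $C'\subseteq B'$, and abbreviating $a_{C'} := \sum_{i\in C'}w_i + c$, the identities $|B\setminus C'| = |B'\setminus C'|+1$ and $|B\setminus(C'\cup\{m\})| = |B'\setminus C'|$ give
\begin{equation*}
K_B = \sum_{C'\subseteq B'}(-1)^{|B'\setminus C'|}\Bigl[\log\bigl(1+\exp(a_{C'}+w_m)\bigr) - \log\bigl(1+\exp(a_{C'})\bigr)\Bigr].
\end{equation*}
Since $w_{B'}$ and $c$ are fixed, the numbers $u_{C'} := \exp(a_{C'}) > 0$ are constants. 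Putting $t := \exp(w_m)$, which runs bijectively over $(0,\infty)$ as $w_m$ runs over $\RR$, the equation $K_B = J_B$ is equivalent to
\begin{equation*}
\sum_{C'\subseteq B'}(-1)^{|B'\setminus C'|}\log\bigl(1+t\,u_{C'}\bigr) = J', \qquad J' := J_B + \sum_{C'\subseteq B'}(-1)^{|B'\setminus C'|}\log\bigl(1+u_{C'}\bigr),
\end{equation*}
where $J'$ depends only on $J_B$, $w_{B'}$, and $c$.

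Next I would exponentiate and clear denominators. Of the $2^{|B'|}$ subsets $C'\subseteq B'$, exactly $2^{|B'|-1}$ have $|B'\setminus C'|$ even and the other $2^{|B'|-1}$ have it odd. Exponentiating the last display gives $\prod_{C'\subseteq B'}(1+t\,u_{C'})^{(-1)^{|B'\setminus C'|}} = \exp(J')$, and multiplying through by the odd-parity factors (each of which is positive for $t>0$) turns this into $P(t) = 0$, where
\begin{equation*}
P(t) := \prod_{\substack{C'\subseteq B' \\ |B'\setminus C'|\ \mathrm{even}}}\bigl(1+t\,u_{C'}\bigr) \;-\; \exp(J')\prod_{\substack{C'\subseteq B' \\ |B'\setminus C'|\ \mathrm{odd}}}\bigl(1+t\,u_{C'}\bigr)
\end{equation*}
is a polynomial in $t$ of degree $2^{|B'|-1}$, being a difference of two products of $2^{|B'|-1}$ linear factors (each with positive leading coefficient). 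Since $1+t\,u_{C'}\neq 0$ for every $t>0$, all the manipulations above are reversible on $(0,\infty)$, so there is $w_m\in\RR$ with $K_B = J_B$ if and only if $P$ has a positive root.

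I expect that there is no serious obstacle here: the only care required is the combinatorial bookkeeping, namely verifying that the two parity classes of subsets of $B'$ have the same cardinality $2^{|B'|-1}$ (so that clearing denominators yields an honest polynomial, not a Laurent, identity) and carrying the constant $J'$ correctly through the exponentiation. Two harmless subtleties should be noted. The top coefficient $\prod_{\mathrm{even}}u_{C'} - \exp(J')\prod_{\mathrm{odd}}u_{C'}$ of $P$ may vanish for special data; this is immaterial, since the statement only asserts that this fixed polynomial of formal degree $2^{|B'|-1}$ has a positive root. And in the degenerate case $B' = \emptyset$ (that is, $|B| = 1$) the whole construction collapses to the linear polynomial $P(t) = 1 + t\,u_\emptyset - \exp(J')$.
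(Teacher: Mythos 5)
Your proposal is correct and follows essentially the same route as the paper: both reduce $K_B=J_B$ to the equation $K_{B'}(w_{B'},c+w_m)=K_{B'}(w_{B'},c)+J_B$ (your $J'$ is the paper's $r$), substitute $t=e^{w_m}$, exponentiate, and clear the odd-parity denominators to obtain a polynomial of degree $2^{|B'|-1}$ whose positive roots correspond to solutions. Your added remarks on the equal cardinality of the two parity classes and the possible vanishing of the leading coefficient are harmless refinements of the same argument.
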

\begin{proof}[Proof of Proposition~\ref{proposition:1a}]
Observe that
\begin{equation*}
	K_{B}(w,c) = K_{B'}(w_{B'}, c+ w_{m}) - K_{B'}(w_{B'}, c). 
\end{equation*}
Hence $K_B = J_B$ if and only if $K_{B'}(w_{B'}, c+ w_{m}) = K_{B'}(w_{B'}, c)  + J_B =:r$. 
We use the abbreviation $\tilde t = e^t$, which implies positivity. 
We have 
\begin{align*}
K_{B'}(w_{B'},c + w_m) 
&= \sum_{C\subseteq B'} (-1)^{|B'\setminus C|} \log\left(1 + \exp\Big(\sum_{i\in C} w_i + c + w_m\Big)\right) \\
&= \log\left(\prod_{C\subseteq B'}  \Big(1 + \tilde w_m \tilde c\prod_{i\in C} \tilde w_i \Big)^{(-1)^{|B'\setminus C|}} \right). 
\end{align*}
Now, $K_{B'}(w_{B'},c + w_m) = r$ if and only if 
\begin{gather*}
\prod_{C\subseteq B'}  \Big(1 + \tilde{w_m} \tilde c\prod_{i\in C} \tilde w_i \Big)^{(-1)^{|B'\setminus C|}} = \tilde r,  
\intertext{or, equivalently,}  
\prod_{ \substack{C\subseteq B'\colon \\ B'\setminus C\text{ even} }}\!\! \Big(1 + \tilde w_m \tilde c\prod_{i\in C} \tilde w_i \Big)  \;\;-\;\; \tilde r \!\! \prod_{\substack{C\subseteq B'\colon \\ B'\setminus C\text{ odd} }} \!\!\Big(1 + \tilde w_m \tilde c\prod_{i\in C} \tilde w_i \Big) = 0. 
\end{gather*} 
This is a polynomial of degree at most $2^{|B'|-1}$ in $\tilde w_m=e^{w_m}$. 
\end{proof}

This description implies various kinds of constraints. 
For example, by Descartes' rule of signs, a polynomial can only have positive roots if the sequence of polynomial coefficients, ordered by degree, has sign changes. 

\section{Conditionally Independent Hidden Variables}
\label{section:RBMs}

In the case of a bipartite graph between $V$ and $H$ with all variables binary, 
the hierarchical model (or rather its visible marginal) is a restricted Boltzmann machine, denoted $\RBM_{V,H}$. This model is illustrated in Figure~\ref{fig:spnetw}. 
The free energy takes the form 
\begin{equation*}
F(x) 
=  
\sum_{i \in V} b_i x_i 
+
\sum_{j\in H}
\log\left(1 + \exp \Big(\sum_{i\in V} w_{ji} x_i  + c_j\Big)\right). 
\end{equation*}
This is the sum of an arbitrary degree-one polynomial, with coefficients $b_i\in\mathbb{R}$, $i\in V$, and $\nh=|H|$ independent soft-plus units, 
with parameters $w_{ji}\in\mathbb{R}$, $j\in H, i\in V$ and $c_j\in\mathbb{R}$, $j\in H$. 
The free energy contributed by the hidden variables can be thought of as a feedforward network with soft-plus computational units. 

\begin{figure}
\centering
\includegraphics{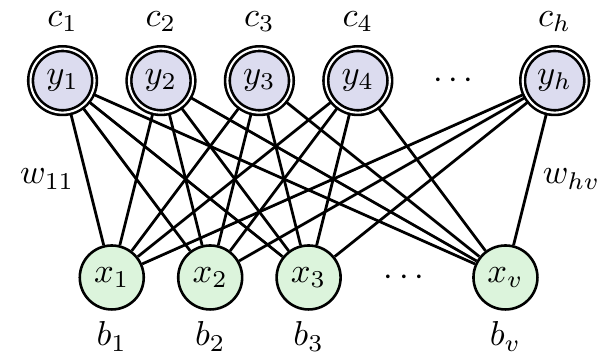}
\caption{A restricted Boltzmann machine. The free energy contributed by the hidden units is a sum of independent soft-plus units. 
}
\label{fig:spnetw}
\end{figure}

We can use each soft-plus unit to model a group of coefficients of any given polynomial, starting at the highest degrees. 
Using the results from Section~\ref{section:softplus} we arrive at the following representation result:  

\begin{theorem}\label{theorem1a}
Every distribution from a hierarchical model $\Ecal_S$ on $\{0,1\}^V$ can be approximated arbitrarily well by distributions from $\RBM_{V,H}$ whenever there exist $\nh$ sets $\bfB_1,\ldots, \bfB_{\nh}\subseteq 2^V$ which cover $\{\Lambda\in S\colon |\Lambda|\geq 2\}$ in reverse inclusion order, 
where each $\bfB_j$ is a star tuple or an edge pair of sets of cardinality at least $3$. 
\end{theorem}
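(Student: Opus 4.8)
The plan is to build the free energy $F$ of the $\RBM_{V,H}$ one soft-plus unit at a time, peeling off groups of coefficients of the target energy polynomial $E$ from the highest degrees downward. Write $E(x) = \sum_{\Lambda\in S} J_\Lambda \prod_{i\in\Lambda} x_i$, and let $\bfB_1,\dots,\bfB_\nh$ be the given cover of $\{\Lambda\in S\colon|\Lambda|\ge2\}$ in reverse inclusion order; concretely, ``covers in reverse inclusion order'' should mean that we can enumerate the $\bfB_j$ so that every $\Lambda$ with $|\Lambda|\ge2$ lies in some $\bfB_j$, and no $\bfB_j$ contains a proper superset of any element it must still supply — equivalently, we process them from largest sets to smallest. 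First I would fix a tolerance $\epsilon>0$ and, working in this order, assign to hidden unit $j$ the job of contributing, up to error $\epsilon/\nh$ in each coefficient, exactly the values $J_\Lambda$ for those $\Lambda\in\bfB_j$ not already handled by an earlier unit, while contributing at most $\epsilon/\nh$ to every other coefficient of degree $\ge 2$.

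The core of the argument is that each such unit exists. If $\bfB_j$ is a star tuple $\{B\cup\{k\}\colon k\in B'\}$, this is exactly Lemma~\ref{generallemma}: choosing $w_i=0$ for $i\notin B\cup B'$, there is a choice of $w_{B\cup B'}$ and $c$ making $K_{B\cup\{k\}}\approx J_{B\cup\{k\}}$ for $k\in B'$ and $K_C\approx0$ for all other $C$ except $C=B$ (where $K_B\approx\tfrac12\omega$ is large). If $\bfB_j=\{B,B'\}$ is an edge pair with $|B|\ge 3$ — hence by Lemma~\ref{proposition:region} the case $|B|\ge4$, or $|B|=3$ — I need $(K_B,K_{B'})$ arbitrarily close to a prescribed $(J_B,J_{B'})$ with all other $K_C$ of degree $\ge2$ negligible. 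For $|B|\ge4$ Lemma~\ref{proposition:region} gives the pair freely; I additionally need the ``all other coefficients negligible'' part, which follows from the same construction (the unit is supported on $w_i$, $i\in B$, so $K_C=0$ unless $C\subseteq B$, and inspecting the construction used to prove Lemma~\ref{proposition:region} shows the intermediate coefficients between $B'$ and $B$, and the unique one-step-down coefficients, can be driven to $0$). The leftover low-order junk — the linear terms and the constant, e.g.\ $K_B\approx\tfrac12\omega$ when a star tuple has $|B|\ge1$, and $K_\emptyset$ — is harmless: linear terms are absorbed by the free parameters $b_i$ in the $\RBM$ free energy, and the constant only shifts the log-partition function $Z$.

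After processing all $\nh$ units, the total free energy $F$ satisfies $|K_B(F) - J_B|\le\epsilon$ for every $B$ with $|B|\ge2$ that lies in $S$; for $B\notin S$ with $|B|\ge2$ we get $|K_B(F)|\le\epsilon$; and the degree-$\le1$ part of $F-E$ can be made exactly zero by a suitable choice of the $b_i$. Hence $\|F-E\|_\infty$ on $\{0,1\}^V$ is $O(2^{|V|}\epsilon)$, which can be made as small as desired. Standard continuity of the normalization map $E\mapsto \frac1{Z(E)}\exp(E(\cdot))$ then gives that the resulting $\RBM$ distribution approximates the target distribution in $\Ecal_S$ arbitrarily well, proving the theorem.

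The main obstacle I anticipate is bookkeeping rather than a deep new idea: I must verify that the ``reverse inclusion order'' hypothesis genuinely guarantees the peeling is consistent — that when unit $j$ is built, every coefficient $K_B$ it is forced to perturb (the $B\in\bfB_j$ and the incidental high-degree ones it touches) is either one that still needs to be set, or one of degree $\le1$, or one that later units will not need to re-touch. In particular I must confirm that processing star tuples and edge pairs from the top down never has a later unit clobber a coefficient an earlier unit already fixed at a nonzero target value; this is where the precise meaning of ``cover in reverse inclusion order'' does the work, and I would state it as the condition that for each $j$, no element of $\bfB_j$ is strictly contained in an element of $\bfB_{j'}$ for $j'>j$ that unit $j'$ must supply — together with the observation from Lemmas~\ref{generallemma} and~\ref{proposition:region} that each unit only ever perturbs coefficients $K_C$ with $C$ equal to, or contained in, the sets of its tuple.
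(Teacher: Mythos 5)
Your proposal is correct and follows essentially the same route as the paper's own proof: cover the degree-$\geq 2$ monomials by star tuples and edge pairs, control each group with one soft-plus unit via Lemmas~\ref{proposition:region} and~\ref{generallemma}, process from highest degree downward so that any uncontrolled perturbations land only on proper subsets that later units will re-handle, and absorb the degree-$\leq 1$ residue into the visible biases and the normalization. You in fact supply more of the bookkeeping (the residual-target peeling, the invariant that each unit only perturbs coefficients $K_C$ with $C$ contained in its assigned sets, and the final continuity step from free energies to distributions) than the paper's own proof, which states this argument only in outline.
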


\begin{proof}[Proof of Theorem~\ref{theorem1a}]
We need to express the possible energy functions of the hierarchical model as sums of independent soft-plus units plus linear terms. 
This problem can be reduced to covering the appearing monomials of degree two or more by groups of coefficients that can be jointly controlled by soft-plus units. 
In view of Lemmas~\ref{proposition:region} and~\ref{generallemma}, 
edge pairs with sets of cardinality $3$ or more and star tuples can be jointly controlled. 
We start with the highest degrees and cover monomials downwards, 
because setting the coefficients of a given group may produce uncontrolled values for the coefficients of smaller monomials. 
Since $S$ is a simplicial complex, we only need to cover the elements of $S$. 
\end{proof}

Finding a minimal covering is in general a hard combinatorial problem. 
In the following we derive upper bounds for the $k$-interaction model, which is the hierarchical model $\Ecal_S$ with $S=S_k:=\{\Lambda\subseteq V\colon |\Lambda|\leq k\}$. 
We will focus on star tuples and consider individual coverings of the layers $\binom{V}{j} = \{\Lambda\subseteq V\colon |\Lambda| = j\}$. 
Let $\nv=|V|$. Denote $D(\nv,j)$ the smallest number of star tuples that cover $\binom{V}{j}$. 
We use the following notion from the theory of combinatorial designs (see~\cite{Colbourn:2006:HCD:1202540} for an overview on that subject). For integers $\nv\geq k>r$ denote $C(\nv,k,r)$ the smallest possible number of elements of $\binom{V}{k}$ such that every element from $\binom{V}{r}$ is contained in at least one of them. 

\begin{lemma}
\label{lemma:covering}
For $0< j \leq \nv$, 
the minimal number of star tuples that cover $\binom{V}{j}$ 
is $D(\nv,j) = C(\nv,\nv-j+1,\nv-j)$. 
Inserting known results for $C(\nv,t+1,t)$ we obtain 
the exact values 
\begin{equation*}
\newcolumntype{Z}{@{\extracolsep{.2cm}}r@{\extracolsep{.2cm}}}%
\renewcommand{\arraystretch}{1.2}
\begin{array}{lZl}
D(\nv,1) 
&=& 1 \\
D(\nv,2) 
&=& \nv-1 \\ 
D(\nv,3) 
&=& 
\left\lceil \frac{\nv}{\nv-2}\left\lceil\frac{\nv-1}{\nv-3} \cdots\left\lceil\frac{4}{2}\right\rceil\cdots\right\rceil \right\rceil \\
D(\nv,\nv-3) 
&=& \left\lceil \frac{\nv}{4}\left\lceil\frac{\nv-1}{3}\left\lceil\frac{\nv-2}{2}\right\rceil\right\rceil \right\rceil \quad (\nv\not\equiv 7\operatorname{mod}12)\\
D(\nv,\nv-2) 
&=& \left\lceil\frac{\nv}{3}\left\lceil\frac{\nv-1}{2}\right\rceil\right\rceil \\
D(\nv,\nv-1) 
&=&\left\lceil\frac{\nv}{2}\right\rceil \\
D(\nv,\nv) 
&=& 1  
\end{array}
\end{equation*}
and the general bound 
\begin{equation*}
D(\nv,j) 
\leq \frac{1+\log(\nv-j+1)}{\nv-j+1}\binom{\nv}{j}, \quad 0<j\leq \nv. 
\end{equation*}
Furthermore, we have the simple bound $D(\nv,j)\leq \binom{\nv-1}{j-1}$, $0<j\leq \nv$. 
\end{lemma}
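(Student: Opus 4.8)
The plan is to reduce the star-tuple covering problem to a covering-design problem by a complementation bijection, then read the small cases off the literature on covering numbers and obtain the two general bounds by a fractional/greedy argument and an explicit construction. The one substantive step is the reduction, so I would carry it out first. A star tuple all of whose members have cardinality $j$ has the form $\{B\cup\{i\}:i\in B'\}$ with $|B|=j-1$ and $B\cap B'=\emptyset$, and it covers precisely those $j$-sets $A$ with $B\subseteq A$ and $A\setminus B\subseteq B'$. Since enlarging $B'$ can only enlarge the covered family, an optimal covering of $\binom{V}{j}$ may be assumed to use only ``full'' star tuples, each indexed by its root $B\in\binom{V}{j-1}$ and covering all $j$-sets containing $B$. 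Hence $D(\nv,j)$ is the least number of $(j-1)$-sets such that every $j$-set contains one of them. The map $B\mapsto V\setminus B$ reverses inclusion and sends $(j-1)$-sets to $(\nv-j+1)$-sets and $j$-sets to $(\nv-j)$-sets, so the condition becomes: every $(\nv-j)$-set is contained in one of the chosen $(\nv-j+1)$-sets. That is $C(\nv,\nv-j+1,\nv-j)$ by definition, which gives the first claim.

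For the exact values I would set $t=\nv-j$ and substitute known values of $C(\nv,t+1,t)$. The two extreme cases $C(\nv,1,0)=C(\nv,\nv,\nv-1)=1$ are trivial and give $D(\nv,\nv)=D(\nv,1)=1$. For $t=1$, $C(\nv,2,1)$ is the minimum edge cover of $K_\nv$, equal to $\lceil\nv/2\rceil$, and dually $C(\nv,\nv-1,\nv-2)$ is a minimum vertex cover of $K_\nv$, equal to $\nv-1$. For $t=2$, $C(\nv,3,2)$ attains its Schönheim bound $\lceil\frac{\nv}{3}\lceil\frac{\nv-1}{2}\rceil\rceil$ (Fort--Hedlund), and dually $C(\nv,\nv-2,\nv-3)=\binom{\nv}{2}-\lfloor\nv^2/4\rfloor$ by Turán's theorem, since a set of edges of $K_\nv$ meets every triangle iff its complement is triangle-free; a short induction shows this equals the iterated ceiling $\lceil\frac{\nv}{\nv-2}\lceil\frac{\nv-1}{\nv-3}\cdots\lceil\frac{4}{2}\rceil\cdots\rceil\rceil$. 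Finally $C(\nv,4,3)$ attains its Schönheim bound $\lceil\frac{\nv}{4}\lceil\frac{\nv-1}{3}\lceil\frac{\nv-2}{2}\rceil\rceil\rceil$ except when $\nv\equiv 7\pmod{12}$, a known fact from the theory of covering designs (see \cite{Colbourn:2006:HCD:1202540}), which yields the stated value of $D(\nv,\nv-3)$ for $\nv\not\equiv 7\pmod{12}$.

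For the general bound I would view $C(\nv,\nv-j+1,\nv-j)$ as a set-cover problem on the universe $\binom{V}{\nv-j}$ in which each covering set consists of the $\binom{\nv-j+1}{\nv-j}=\nv-j+1$ sub-$(\nv-j)$-sets of a fixed $(\nv-j+1)$-set. Putting weight $1/j$ on every one of the $\binom{\nv}{j-1}$ covering sets is a fractional cover (each $(\nv-j)$-set lies in exactly $j$ of them) of total weight $\binom{\nv}{j-1}/j=\binom{\nv}{j}/(\nv-j+1)$, and the standard $(1+\log d)$ integrality gap of greedy set cover with $d=\nv-j+1$ gives $D(\nv,j)\le\frac{1+\log(\nv-j+1)}{\nv-j+1}\binom{\nv}{j}$. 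For the simple bound I would fix $i_0\in V$ and take the $\binom{\nv-1}{j-1}$ star tuples rooted at the $(j-1)$-subsets of $V\setminus\{i_0\}$: a $j$-set $A$ with $i_0\notin A$ contains one of its own $(j-1)$-subsets, each avoiding $i_0$, while a $j$-set $A$ with $i_0\in A$ contains $A\setminus\{i_0\}$, which avoids $i_0$.

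The conceptual core is the complementation reduction; once that is in place, the remainder is an appeal to tabulated covering-design numbers plus two standard extremal arguments. The part I expect to be most delicate is matching the closed forms stated in the lemma — the nested ceilings and, above all, the $\nv\equiv 7\pmod{12}$ exception for $t=3$ — against the literature values of $C(\nv,t+1,t)$ and against the recursive Schönheim bound, i.e., pinning down exactly when Schönheim's bound is tight: for $t=2$ this is Fort--Hedlund, for $j=3$ it is Turán's theorem, and for $t=3$ it is a genuinely nontrivial result about coverings by quadruples.
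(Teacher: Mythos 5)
Your proposal is correct and follows essentially the same route as the paper: translate star-tuple coverings of $\binom{V}{j}$ into collections of $(j-1)$-sets hitting every $j$-set, identify this with the covering number $C(\nv,\nv-j+1,\nv-j)$ by complementation, read off the exact small-$t$ values from the covering-design literature, obtain the logarithmic bound from the Erd\H{o}s--Spencer estimate (which you reprove via the fractional-cover/greedy argument rather than citing), and get the simple bound from the fixed-element construction. The only difference is that you supply details the paper delegates to references.
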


\begin{proof}[Proof of Lemma~\ref{lemma:covering}]
A star tuple covering of $\binom{V}{j}$ is given by a collection $B_1,\ldots, B_n$ of elements of $\binom{V}{j-1}$ such that every element of $\binom{V}{j}$ contains at least one of the $B_i$. 
The minimal possible number of elements in such a collection is precisely $D(\nv,j) = C(\nv,\nv-j+1,\nv-j)$. 
The equalities follow from corresponding equalities for $C(\nv,\nv-j+1,\nv-j)$ by several authors, which are listed in~\cite{JCD:JCD5}. 
The inequality follows from a result by Erd\H{o}s and Spencer~\cite{erdosspencer} showing that 
$C(v,k,r)\leq \left[ \binom{v}{r}\middle/\binom{k}{r} \right] \left[1 + \log\binom{k}{r}\right]$. 
The simple bound results from the fact that each set $B$ from $\binom{V}{j}$ contains a set $B'$ from $\binom{V\setminus\{1\}}{j-1}$. 
\end{proof}

\begin{remark}
Lemma~\ref{lemma:covering} presents widely applicable bounds on the cardinality of star tuple coverings, which are naturally not always tight. For $\nv\leq 28$, better individual bounds on $C(\nv,t+1,t)$ can be found in~\cite[Table~III]{JCD:JCD5}. 
See also~\cite{oeis} for a list of known exact values. 
In another direction~\cite{Roedl198569} offers optimal asymptotic bounds on $C(v,k,r)$ for fixed $k$ and $r$. 
\end{remark}

Lemma~\ref{lemma:covering} allows us to formulate the following more explicit version of Theorem~\ref{theorem1a}: 

\begin{theorem}
\label{theorem2a}
Let $1\leq k\leq \nv$. 
Every distribution from the $k$-interaction model $\Ecal_{S_k}$ on $\{0,1\}^V$ can be approximated arbitrarily well by distributions from $\RBM_{V,H}$ whenever $\nh$ surpasses or equals $U(\nv,k)=\sum_{j=2}^k D(\nv,j)$, which is bounded above as indicated in Lemma~\ref{lemma:covering}. 
This is the case, in particular, whenever $\nh\geq\sum_{j=2}^k \binom{\nv-1}{j-1}$ or 
$\nh\geq \frac{\log(\nv-1)+1}{\nv+1}\sum_{j=2}^k\binom{\nv+1}{j}$. 
\end{theorem}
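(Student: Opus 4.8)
The plan is to deduce Theorem~\ref{theorem2a} from Theorem~\ref{theorem1a} and Lemma~\ref{lemma:covering} essentially by bookkeeping. First I would set $S=S_k$ and note that $\{\Lambda\in S_k\colon|\Lambda|\geq 2\}$ decomposes as the disjoint union of the layers $\binom{V}{j}$ for $j=2,\ldots,k$. By the definition of $D(\nv,j)$, each layer $\binom{V}{j}$ can be covered by $D(\nv,j)$ star tuples; concatenating these coverings over $j$ gives a family of $\sum_{j=2}^{k}D(\nv,j)$ star tuples whose union covers all of $\{\Lambda\in S_k\colon|\Lambda|\geq 2\}$. A small point to check is the phrase ``in reverse inclusion order'' in Theorem~\ref{theorem1a}: since a star tuple covering $\binom{V}{j}$ consists of sets of cardinality $j$, and we can simply process the layers in order of decreasing $j$, the downward-covering requirement is met. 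Hence Theorem~\ref{theorem1a} applies with $\nh=U(\nv,k):=\sum_{j=2}^{k}D(\nv,j)$, and therefore also with any $\nh\geq U(\nv,k)$ (an unused hidden unit can be made to contribute a negligible constant, or one simply invokes that $\RBM_{V,H}\subseteq\RBM_{V,H'}$ for $|H|\leq|H'|$).

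Next I would substitute the two bounds from Lemma~\ref{lemma:covering}. The simple bound $D(\nv,j)\leq\binom{\nv-1}{j-1}$ immediately gives the first explicit sufficient condition $\nh\geq\sum_{j=2}^{k}\binom{\nv-1}{j-1}$. For the second, Lemma~\ref{lemma:covering} gives $D(\nv,j)\leq\frac{1+\log(\nv-j+1)}{\nv-j+1}\binom{\nv}{j}$. I want to bound $\sum_{j=2}^{k}D(\nv,j)$ by $\frac{\log(\nv-1)+1}{\nv+1}\sum_{j=2}^{k}\binom{\nv+1}{j}$, so the remaining task is purely an inequality: to show that for each $j$ with $2\leq j\leq k\leq\nv$ one has
\begin{equation*}
\frac{1+\log(\nv-j+1)}{\nv-j+1}\binom{\nv}{j}\;\leq\;\frac{1+\log(\nv-1)}{\nv+1}\binom{\nv+1}{j}.
\end{equation*}
Using $\binom{\nv+1}{j}=\frac{\nv+1}{\nv+1-j}\binom{\nv}{j}$, this reduces to $\frac{1+\log(\nv-j+1)}{\nv-j+1}\leq\frac{1+\log(\nv-1)}{\nv+1-j}$, i.e. to $(\nv+1-j)(1+\log(\nv-j+1))\leq(\nv-j+1)(1+\log(\nv-1))$, which holds because $j\geq 2$ makes $\nv-j+1\leq\nv-1$ and the map $t\mapsto 1+\log t$ is increasing, while $\nv+1-j\le\nv-j+1$. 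Summing the per-$j$ inequality over $j=2,\ldots,k$ yields the claimed bound. (If the constant in the statement is meant with a strict inequality rather than $\le$, one absorbs the slack into a single surplus hidden unit, as above.)

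The only genuine content beyond routine algebra is making sure the ``reverse inclusion order'' hypothesis of Theorem~\ref{theorem1a} is literally satisfied by the layer-by-layer construction, and that the star tuples supplied by the definition of $D(\nv,j)$ indeed consist of sets of cardinality $\geq 2$ — which they do for $j\geq 2$, since a star tuple covering $\binom{V}{j}$ has root sets in $\binom{V}{j-1}$ and leaves in $\binom{V}{j}$. Everything else is elementary. I expect the main obstacle, such as it is, to be stating the monotonicity inequality above cleanly enough that the reader sees it is immediate; there is no deep step. I would finish by remarking that the two explicit conditions are not comparable in general — the binomial-sum bound is better for small $\nv$, the logarithmic one for large $\nv$ — and that for concrete small $\nv$ one can instead plug in the sharper values of $C(\nv,t+1,t)$ referenced after Lemma~\ref{lemma:covering}.
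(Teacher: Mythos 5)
Your proposal is correct and follows essentially the same route as the paper: deduce the theorem from Theorem~\ref{theorem1a} by covering each layer $\binom{V}{j}$, $j=k,\ldots,2$, with $D(\nv,j)$ star tuples, then substitute the two bounds from Lemma~\ref{lemma:covering}, using $\binom{\nv+1}{j}=\frac{\nv+1}{\nv+1-j}\binom{\nv}{j}$ and the monotonicity of $\log$ for the second one. The only cosmetic remark is that your final reduction is even simpler than you state, since $\nv+1-j$ and $\nv-j+1$ are the same number, so the per-$j$ inequality is just $1+\log(\nv-j+1)\leq 1+\log(\nv-1)$.
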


\begin{proof}[Proof of Theorem~\ref{theorem2a}]
This follows directly from Theorem~\ref{theorem1a} and Lemma~\ref{lemma:covering}. 
For the last statement we use the simple bound from the lemma, by which $D(\nv,j)\leq \binom{\nv-1}{j-1}$, 
and the general bound, by which $D(\nv,j) \leq \frac{\log(\nv-j+1)+1}{\nv+1}\binom{\nv+1}{j}$.  
\end{proof}

In order to provide a numerical sense of Theorem~\ref{theorem2a} we give upper bounds on $U(\nv,k)$, $2\leq k\leq \nv\leq 14$, in Table~\ref{table:kinteraction}. 
For convenience we also provide an Octave~\cite{octave} script for computing such bounds in~\href{http://personal-homepages.mis.mpg.de/montufar/}{http://personal-homepages.mis.mpg.de/montufar/starcover.m}.

\begin{table}
\centering
\newcommand{\ob}[2]{$\underset{\tiny #1}{#2}$} 
\newcommand{\ib}[2]{\ensuremath{\overset{\tiny #2}{#1}}} 
\newcommand{\tbs}{\textbackslash}
\begin{tabular}{c|ccccccccccccc}
$k$\tbs $v$ & 2 	  & 3 		& 4 	  & 5 		  & 6 		& 7 		& 8 				  & 9  		  			  & 10 					  & 11 					   & 12 					 & 13  						& 14\\\hline
2 			&\ob{1}{1}& 2 		& 3 	  & 4 		  & 5 		& 6 		& 7 				  & 8  		  			  & 9  					  & 10 					   & 11 				     & 12  						& 13\\
3 			& - 	  &\ob{3}{3}& 5 	  & 8 		  & 11		& 15		& 19				  & 24 		  		  	  & 29 					  & 35 				   	   & 41  					 & 48  						& 55\\
4			& - 	  & - 		&\ob{7}{6}& 11		  & 17		& 27		& \ib{39}{55}		  & \ib{54}{82} 		  &\ib{74}{117}			  & \ib{98}{162}		   & \ib{125}{216} 			 & 268 						& 341\\
5			& - 	  & - 		& - 	  &\ob{15}{12}& 20		& 34		& \ib{53}{69}		  & \ib{84}{147}		  &\ib{124}{234}		  & \ib{182}{356}		   & \ib{251}{520} 			 & \ib{453}{725} 			& 1002\\
6			& - 	  & - 		& - 	  & - 		  &\ob{31}{21}& 38		& \ib{64}{80}		  &\ib{109}{172}		  &\ib{175}{343}		  & \ib{282}{570}		   & \ib{427}{908} 			 & \ib{750}{1385}			& \ib{1473}{2068}\\
7			& - 	  & - 		& - 	  & - 		  & - 		&\ob{63}{39}& \ib{68}{84}		  &\ib{121}{184}  		  &\ib{205}{373} 		  & \ib{348}{742}		   & \ib{559}{1276}			 & \ib{1014}{2107}			& \ib{1944}{3389}\\
8			& - 	  & - 		& - 	  & - 		  & - 		& - 		&\ob{127}{\ib{69}{85}}&\ib{126}{189}		  &\ib{222}{390}		  & \ib{395}{789}		   & \ib{672}{1534}			 & \ib{1259}{2705}			& \ib{2452}{4652}\\
9			& - 	  & - 		& - 	  & - 		  & - 		& - 		& - 				  &\ob{255}{\ib{127}{190}}&\ib{227}{395}		  & \ib{414}{808}		   & \ib{729}{1591}			 & \ib{1416}{3078}			& \ib{2823}{5583}\\
10			& - 	  & - 		& - 	  & - 		  & - 		& - 		& - 				  & -  					  &\ob{511}{\ib{228}{396}}& \ib{420}{814}		   & \ib{753}{1615}			 & \ib{1494}{3156}			& \ib{3053}{6105}\\
11			& - 	  & - 		& - 	  & - 		  & - 		& - 		& - 				  & -  					  & -  					  &\ob{1023}{\ib{421}{815}}& \ib{759}{1621}			 & \ib{1520}{3182}			& \ib{3144}{6196}\\
12			& - 	  & - 		& - 	  & - 		  & - 		& - 		& - 				  & -  					  & -  					  & -  					   &\ob{2047}{\ib{760}{1622}}& \ib{1527}{3189}			& \ib{3177}{6229}\\
13			& - 	  & - 		& - 	  & - 		  & - 		& - 		& - 				  & -  					  & -  					  & -  					   & -   					 &\ob{4095}{\ib{1528}{3190}}& \ib{3184}{6236}\\
14			& - 	  & - 		& - 	  & - 		  & - 		& - 		& - 				  & -  					  & -  					  & -  					   & -   					 & -   						& \ob{8191}{\ib{3185}{6237}}
\end{tabular}
\caption{
Upper bounds on the minimal number of hidden units for which $\RBM_{V,H}$ can approximate every distribution from the $k$-interaction model $\Ecal_{S_k}$ on $\{0,1\}^V$ arbitrarily well, following from Theorem~\ref{theorem2a}, for $2\leq k\leq \nv\leq 14$. 
Shown are upper bounds on $U(\nv,k)=\sum_{j=2}^k D(\nv,j)$ evaluated using Lemma~\ref{lemma:covering} and some individual bounds on $D(\nv,j)=C(\nv,\nv-j+1,\nv-j)$ from~\cite[Table~III]{JCD:JCD5}. 
Upper scripts indicate values obtained using only Lemma~\ref{lemma:covering}. 
Lower scripts indicate the previous RBM universal approximation bound $2^{\nv-1}-1$ from~\cite{montufar2011refinements}. 
Entries with $v\leq 9$ or $k\leq 3$ are exact values of $U(\nv,k)$. 
}
\label{table:kinteraction}
\end{table}

In the special case $k=\nv$, the $k$-interaction model $\Ecal_{S_k}$ is the \emph{full interaction model} and contains all (strictly positive) probability distributions on $\{0,1\}^V$. 
Hence Theorem~\ref{theorem2a} entails the following universal approximation result: 

\begin{corollary}
\label{corollary:universal}
Every distribution on $\{0,1\}^V$ can be approximated arbitrarily well by distributions from $\RBM_{V,H}$ whenever $\nh$ surpasses or equals $U(\nv,\nv)$, which is bounded above as indicated in Lemma~\ref{lemma:covering}. This is the case, in particular, whenever $\nh\geq 2^{\nv-1}-1$ or 
$\nh\geq \frac{2(\log(\nv-1)+1)}{\nv+1}(2^{\nv} -(\nv+1) -1) +1$. 
\end{corollary}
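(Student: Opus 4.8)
\section*{Proof proposal for Corollary~\ref{corollary:universal}}

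The plan is to specialize Theorem~\ref{theorem2a} to the case $k=\nv$ and then rewrite the resulting quantity $U(\nv,\nv)=\sum_{j=2}^{\nv}D(\nv,j)$ using the two estimates recorded in Lemma~\ref{lemma:covering}.

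First I would note that $S_\nv=2^V$, so the $\nv$-interaction model $\Ecal_{S_\nv}$ is the full interaction model; its closure in the simplex of probability distributions on $\{0,1\}^V$ is the whole simplex, since it already contains every strictly positive distribution and these are dense. Hence the closure of $\RBM_{V,H}$ contains the closure of $\Ecal_{S_\nv}$, and Theorem~\ref{theorem2a} with $k=\nv$ immediately gives that $\RBM_{V,H}$ with $\nh\ge U(\nv,\nv)$ approximates \emph{every} distribution on $\{0,1\}^V$ arbitrarily well. It remains only to convert the bound on $U(\nv,\nv)$ into the two closed-form expressions in the statement.

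For the first expression I would use the simple bound $D(\nv,j)\le\binom{\nv-1}{j-1}$ from Lemma~\ref{lemma:covering}, so that
\[
U(\nv,\nv)\le\sum_{j=2}^{\nv}\binom{\nv-1}{j-1}=\sum_{i=1}^{\nv-1}\binom{\nv-1}{i}=2^{\nv-1}-1
\]
by the binomial theorem. For the second expression I would peel off the top layer, which by Lemma~\ref{lemma:covering} satisfies $D(\nv,\nv)=1$ exactly, and apply the general bound $D(\nv,j)\le\frac{\log(\nv-j+1)+1}{\nv+1}\binom{\nv+1}{j}$ only in the range $2\le j\le\nv-1$. On that range $\nv-j+1\le\nv-1$, hence $\log(\nv-j+1)+1\le\log(\nv-1)+1$, which gives
\[
U(\nv,\nv)\le 1+\frac{\log(\nv-1)+1}{\nv+1}\sum_{j=2}^{\nv-1}\binom{\nv+1}{j}.
\]
Evaluating the binomial sum by subtracting the four boundary terms $j\in\{0,1,\nv,\nv+1\}$ from $2^{\nv+1}$ gives $\sum_{j=2}^{\nv-1}\binom{\nv+1}{j}=2^{\nv+1}-2(\nv+1)-2=2\bigl(2^{\nv}-(\nv+1)-1\bigr)$, and substituting this back produces exactly $\nh\ge\frac{2(\log(\nv-1)+1)}{\nv+1}\bigl(2^{\nv}-(\nv+1)-1\bigr)+1$.

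The argument is essentially routine bookkeeping once Theorem~\ref{theorem2a} is available, so I do not expect a serious obstacle. The one point that deserves care is the last step: bounding the $j=\nv$ term by the general estimate along with all the others would cost an additive $\log(\nv-1)$ and fail to recover the sharp constant, so treating $j=\nv$ separately and keeping precise track of which boundary terms are dropped from the binomial sum is what makes the displayed formula come out exactly as claimed. The density argument in the second paragraph is also worth stating explicitly, since Theorem~\ref{theorem2a} is phrased in terms of $\Ecal_{S_k}$ rather than arbitrary distributions.
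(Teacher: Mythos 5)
Your proposal is correct and follows exactly the route the paper intends: the paper derives the corollary in one line from Theorem~\ref{theorem2a} at $k=\nv$ (using that the full interaction model's strictly positive distributions are dense in the simplex), and your arithmetic with $\sum_{i=1}^{\nv-1}\binom{\nv-1}{i}=2^{\nv-1}-1$ and $\sum_{j=2}^{\nv-1}\binom{\nv+1}{j}=2\bigl(2^{\nv}-(\nv+1)-1\bigr)$ is the bookkeeping the paper leaves implicit. Your observation that the $j=\nv$ term must be handled separately (via $D(\nv,\nv)=1$) rather than absorbed into the uniform factor $\log(\nv-1)+1$ is indeed necessary to recover the exact constant in the stated bound.
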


Corollary~\ref{corollary:universal} provides a significant and unexpected improvement the best previously known upper bound $2^{\nv-1}-1$ from~\cite{montufar2011refinements}. 
Whether the upper bound $2^{\nv -1}-1$ was optimal or not had remained an open problem in~\cite{montufar2011refinements} and several succeeding papers. 
In Table~\ref{table:universal} we give upper bounds on $U(\nv,\nv)$, $2\leq \nv\leq 40$, and compare these with the previous result.

\begin{table} 
\centering
\scalebox{1}{ 
\begin{tabular}{r|r|r|r}
    $v$        & $U(v,v)\leq$      & $2^{v-1}-1=$   &$\left\lceil\frac{2^v}{v+1}-1\right\rceil=$ \\ \hline
              2&              1&              1&              1\\
              3&              3&              3&              1\\
              4&              6&              7&              3\\
              5&             12&             15&              5\\
              6&             21&             31&              9\\
              7&             39&             63&             15\\
              8&             69&            127&             28\\
              9&            127&            255&             51\\
             10&            228&            511&             93\\
             11&            421&           1023&            170\\
             12&            760&           2047&            315\\
             13&           1528&           4095&            585\\
             14&           3185&           8191&           1092\\
             15&           6642&          16,383&           2047\\
             16&          14,269&          32,767&           3855\\
             17&          30,352&          65,535&           7281\\
             18&          63,431&         131,071&          13,797\\
             19&         132,195&         262,143&          26,214\\
             20&         272,160&         524,287&          49,932\\
             21&         553,195&        1048,575&          95,325\\
             22&        1115,207&        2097,151&         182,361\\
             23&        2227,484&        4194,303&         349,525\\
             24&        4427,830&        8388,607&         671,088\\
             25&        8760,826&       16,777,215&        1290,555\\
             26&       17,265,199&       33,554,431&        2485,513\\
             27&       33,951,316&       67,108,863&        4793,490\\
             28&       66,656,315&      134,217,727&        9256,395\\
             29&      132,084,407&      268,435,455&       17,895,697\\
             30&      257,962,181&      536,870,911&       34,636,833\\
             31&      504,141,876&     1073,741,823&       67,108,863\\
             32&      985,875,453&     2147,483,647&      130,150,524\\
             33&     1929,093,753&     4294,967,295&      252,645,135\\
             34&     3776,867,237&     8589,934,591&      490,853,405\\
             35&     7398,516,744&    17,179,869,183&      954,437,176\\
             36&    14,500,416,431&    34,359,738,367&     1857,283,155\\
             37&    28,433,369,622&    68,719,476,735&     3616,814,565\\
             38&    55,779,952,400&   137,438,953,471&     7048,151,460\\
             39&   109,476,401,847&   274,877,906,943&    13,743,895,347\\
             40&   214,954,581,277&   549,755,813,887&    26,817,356,775
\end{tabular}
}
\caption{
Bounds on the minimal number of hidden units for which $\RBM_{V,H}$ can approximate every distribution on $\{0,1\}^V$ arbitrarily well, for $2\leq \nv\leq 40$. 
The first column gives upper bounds following from Corollary~\ref{corollary:universal}. 
Shown are upper bounds on $U(\nv,\nv)=\sum_{j=2}^v D(\nv,j)$ evaluated using Lemma~\ref{lemma:covering} and some individual bounds on $D(\nv,j)=C(\nv,\nv-j+1,\nv-j)$ from~\cite[Table~III]{JCD:JCD5}. 
The second column gives the previous upper bound $2^{\nv-1}-1$ from~\cite{montufar2011refinements}. 
The last column gives the hard lower bound $\left\lceil \frac{2^\nv}{\nv+1}-1\right\rceil$ that results from parameter counting, i.e., from demanding that the model $\RBM_{V,H}$ has at least $(\nh+1)(\nv+1)-1 \geq 2^\nv-1$ parameters. 
}
\label{table:universal}
\end{table}

\begin{remark}
In general an RBM can represent many more distributions than just the interaction models described above. 
For several small examples discussed further below, our bounds for the representation of interaction models are tight. 
However, Theorem~\ref{theorem2a} is based on upper bounds on a specific type of coverings and we suspect that it can be further improved, at least in some special cases, even if not reaching the hard lower bounds coming from parameter counting. 
\end{remark}

Besides from RBMs we can also consider models that include interactions among the visible variables other than biases. In this case we only need to cover the interaction sets from the simplicial complex $S$ that are not already included in the simplicial complex $T$. 
In Theorem~\ref{theorem1a} one just replaces $\{\Lambda\in S\colon |\Lambda|\geq 2\}$ by $S\setminus T$. 
We note the following special case: 

\begin{corollary}\label{corollary2aa}
Every distribution from the $k$-interaction model $\Ecal_{S_k}$ on $\{0,1\}^V$ can be approximated arbitrarily well by the visible marginals of a pairwise interaction model with 
$\nh = \sum_{j=3}^k D(\nv,j)$ hidden binary variables. 
The latter is bounded above as indicated in Lemma~\ref{lemma:covering}. 
In particular, every distribution on $\{0,1\}^V$ can be approximated arbitrarily well by the visible marginals of a pairwise interaction model with $\nh=2^{\nv-1}-(\nv-1)-1$ or $\nh=\frac{2(\log(\nv-2)+1)}{\nv+1}(2^{\nv}-(\nv+1)-1 -\frac{(\nv+1)\nv}{4})+1$ hidden binary variables. 
\end{corollary}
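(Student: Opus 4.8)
The plan is to re-run the layer-by-layer covering argument of Theorem~\ref{theorem1a} in the variant described in the paragraph preceding the corollary, where one covers $S\setminus T$ rather than $\{\Lambda\in S\colon|\Lambda|\ge2\}$. Here the hidden-variable model is a pairwise interaction model with conditionally independent hidden variables whose $T$ already contains every subset of $V$ of cardinality at most two. Consequently the energy $E(x)=\sum_{\Lambda\in S_k}J_\Lambda\prod_{i\in\Lambda}x_i$ of a distribution in $\Ecal_{S_k}$ only needs its monomials of degree between $3$ and $k$ to be realized by soft-plus units; the visible pairwise interactions, the visible biases, and the constant that are present in $T$ take care of everything of degree at most two. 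So it suffices to cover $\{\Lambda\subseteq V\colon 3\le|\Lambda|\le k\}$ in reverse inclusion order by star tuples. Covering the layers one at a time from degree $k$ down to degree $3$, Lemma~\ref{lemma:covering} shows that the layer $\binom{V}{j}$ is covered by $D(\nv,j)$ star tuples, each realized (Lemma~\ref{generallemma}) by a single hidden binary variable that interacts pairwise with the visible variables occurring in that tuple. Summing over $j=3,\dots,k$ yields $\nh=\sum_{j=3}^{k}D(\nv,j)$, and bounding each $D(\nv,j)$ as in Lemma~\ref{lemma:covering} gives the upper bounds advertised in the statement, exactly as in the proof of Theorem~\ref{theorem2a}.

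For the universal-approximation case $k=\nv$ I would finish with elementary binomial bookkeeping. The simple bound $D(\nv,j)\le\binom{\nv-1}{j-1}$ gives
\[
\sum_{j=3}^{\nv}D(\nv,j)\le\sum_{j=3}^{\nv}\binom{\nv-1}{j-1}=\sum_{i=2}^{\nv-1}\binom{\nv-1}{i}=2^{\nv-1}-1-(\nv-1)=2^{\nv-1}-(\nv-1)-1 .
\]
For the second, sharper count I would peel off the top layer, where $D(\nv,\nv)=1$, and apply the general (Erd\H{o}s--Spencer) bound $D(\nv,j)\le\frac{\log(\nv-j+1)+1}{\nv+1}\binom{\nv+1}{j}\le\frac{\log(\nv-2)+1}{\nv+1}\binom{\nv+1}{j}$ only for $3\le j\le\nv-1$ (where $\nv-j+1\le\nv-2$), so that
\[
\sum_{j=3}^{\nv}D(\nv,j)=1+\sum_{j=3}^{\nv-1}D(\nv,j)\le 1+\frac{\log(\nv-2)+1}{\nv+1}\sum_{j=3}^{\nv-1}\binom{\nv+1}{j} ,
\]
and then use $\sum_{j=3}^{\nv-1}\binom{\nv+1}{j}=2^{\nv+1}-2(\nv+1)-2-\tfrac{(\nv+1)\nv}{2}$, which rearranges the right-hand side into $1+\tfrac{2(\log(\nv-2)+1)}{\nv+1}\big(2^{\nv}-(\nv+1)-1-\tfrac{(\nv+1)\nv}{4}\big)$. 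Pulling the $j=\nv$ term out is what makes this match the stated constant: applying the $\log(\nv-2)$ bound uniformly to all $j$ from $3$ to $\nv$ would inflate the estimate by an additive $\log(\nv-2)$.

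The one step that carries genuine content, rather than bookkeeping, is the reduction in the first paragraph: that covering only the degree-$\ge3$ part of $S_k$ really is enough once $T$ contains all subsets of $V$ of size at most two. Its justification is identical to that of Theorem~\ref{theorem1a}: one processes the degree layers top-down, so that pinning down a group of coefficients via a soft-plus unit can only disturb \emph{strictly lower} degree coefficients, and after all layers of degree $\ge3$ have been handled the accumulated free energy is a fixed, finite polynomial of degree at most two (recall that the root of a star tuple contributes a large but finite value, by Lemma~\ref{generallemma}, while every other coefficient outside the tuple is within $\epsilon$ or exactly zero). That remaining degree-$\le2$ part is then cancelled and matched to the target degree-$\le2$ part of $E$ by choosing the visible pairwise weights, biases, and constant; the finitely many $\epsilon$-errors are controlled by choosing the per-layer approximation tolerances small, in decreasing order of degree, which affects only approximation quality and not $\nh$. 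I would also note that an edge pair of cardinality $\ge3$ could replace some star tuples without changing any of these counts, so restricting to star tuples, hence to $D(\nv,j)$, loses nothing here.
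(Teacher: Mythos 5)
Your proposal is correct and follows essentially the same route as the paper, which derives the corollary from the remark preceding it (replace $\{\Lambda\in S\colon|\Lambda|\geq 2\}$ by $S\setminus T$ in Theorem~\ref{theorem1a}, so that only the layers $3\leq j\leq k$ need star-tuple coverings) together with Lemma~\ref{lemma:covering}. Your binomial bookkeeping, including peeling off the top layer $D(\nv,\nv)=1$ before applying the Erd\H{o}s--Spencer bound with $\log(\nv-j+1)\leq\log(\nv-2)$, reproduces exactly the constants stated in the corollary.
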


Corollary~\ref{corollary2aa} improves a previous result by Younes~\cite{Younes1996109}, which showed that a pairwise interaction model with $\nh=2^\nv -\binom{\nv}{2}-\nv-1$ hidden binary variables can approximate every distribution on $\{0,1\}^V$ arbitrarily well.

We close this section with a few small examples illustrating our results. 

\begin{example} 
The model $\RBM_{3,1}$ is the same as the two-mixture of product distributions of $3$ binary variables and is also known as the \emph{tripod tree model}. 
It has $7$ parameters and the same dimension. 
What is the largest hierarchical model contained in the closure of this model? 
The closure of a model $\mathcal{M}$ is the set of all probability distributions that can be approximated arbitrarily well by probability distributions from $\mathcal{M}$. 

The closure of $\RBM_{3,1}$ contains all $3$ hierarchical models on $\{0,1\}^3$ with two pairwise interactions. For example, it contains the model $\Ecal_{S}$ with $S=\{\{1,2\},\{1,3\},\{1\},\{2\},\{3\}\}$. 
Indeed, two quadratic coefficients can be jointly modeled by one soft-plus unit (Lemma~\ref{generallemma}) and the linear coefficients with the biases of the visible variables. 
In particular, the closure of $\RBM_{3,1}$ also contains the $3$ hierarchical models with a single pairwise interaction.   

It does not contain the hierarchical model with $3$ pairwise interactions, $\Ecal_{S}$ with $S=S_2=\{\{1,2\},\{1,3\},\{2,3\},\{1\},\{2\},\{3\}\}$, which is known as the \emph{no-three-way interaction model}. 
One way of proving this is by comparing the possible support sets of the two models, 
as proposed in~\cite{montufar2013mixture}. 
The support set of a product distribution is a cylinder set. 
The support set of a mixture of two product distributions is a union of two cylinder sets. 
On the other hand, the possible support sets of a hierarchical model correspond to the faces of its marginal polytope, $\operatorname{conv}\{(\prod_{i\in\Lambda}x_i)_{\Lambda\in S}\colon x\in \XX \}\subset \mathbb{R}^S$. 
The marginal polytope of the no-three-way interaction model is the cyclic polytope $C(N,d)$ with $N=8$ vertices and dimension $d=6$ 
(see, e.g.,~\cite[Lemma~18]{montufar2013mixture}). 
This is a neighborly polytope, meaning that every $d/2 = 3$ or less vertices form a face. 
In turn, every subset of $\{0,1\}^3$ of cardinality $d/2=3$ is the support set of a distribution in the closure of the no-three-way interaction model.\footnotemark 
Since the set $\{(100), (010), (001)\}$ is not a union of two cylinder sets, the closure of $\RBM_{3,1}$ does not contain the no-three-way interaction model. 
\end{example}
\footnotetext{More generally, in~\cite{Kahle2010} it is shown that if $S\supseteq\{\Lambda\subseteq V\colon |\Lambda|\leq k\}$, then the marginal polytope of $\Ecal_{S}$ is $2^k -1$ neighborly, meaning that any $2^k -1$ or fewer of its vertices define a face.}

\begin{example} 
The closure of $\RBM_{3,2}$ contains the no-three-way interaction model. 
Two of the quadratic coefficients can be jointly modeled with one hidden unit and the third with the second hidden unit (Lemma~\ref{generallemma}). 

It does not contain the full interaction model. 
Following the ideas explained in the previous example, this can be shown by analyzing the possible support sets of the distributions in the closure of $\RBM_{3,2}$. 
For details on this we refer the reader to~\cite{montufar2012does}. 
\end{example}

\begin{example} 
The model $\RBM_{3,3}$ is a universal approximator. 
This follows immediately from the universal approximation bound $2^{v-1}-1$ from~\cite{montufar2011refinements}. 
This observation can be recovered from our results as follows. 
The cubic coefficient can be modeled with one hidden unit (Lemma~\ref{lem:Younes}). 
Two quadratic coefficients can be jointly modeled with one hidden unit and the third with another hidden unit (Lemma~\ref{generallemma}). 
\end{example}

\begin{example} 
The model $\RBM_{4,6}$ is a universal approximator. 
The quartic coefficient can be modeled with one hidden unit. 
The $4$ cubic coefficients can be modeled with two hidden units (Lemma~\ref{generallemma}). 
The $6$ quadratic coefficients can be grouped into $3$ pairs with a shared variable in each pair. 
These can be modeled with $3$ hidden units (Lemma~\ref{generallemma}). 
\end{example}

\section{Conclusions and Outlook}
\label{section:conclusions}

We studied the kinds of interactions that appear when marginalizing over a hidden variable that is connected by pair-interactions with all visible variables. 
We derived upper bounds on the minimal number of variables of a hierarchical model whose visible marginal distributions can approximate any distribution from a given fully observable hierarchical model arbitrarily well. 
These results generalize and improve previous results on the representational power of RBMs from~\cite{montufar2011refinements} and~\cite{Younes1996109}. 

Many interesting questions remain open at this point: 
A full characterization of soft-plus polynomials and the necessary number of hidden variables is missing. 

It would be interesting to look at non-binary hidden variables. 
This corresponds to analyzing the hierarchical models that can be represented by mixture models. 
In the case of conditionally independent binary hidden variables, the partial factorization leads to soft-plus units, whereas in the case of higher-valued hidden variables, it leads to shifted logarithms of denormalized mixtures. 
Similarly, it would be interesting to take a look at non-binary visible variables. In this case state vectors cannot be identified in a one-to-one manner with subsets of variables. This means that the correspondence between function values and polynomial coefficients is not as direct. 

Our analysis could also be extended to cover the representation of conditional probability distributions from hierarchical models in terms of conditional restricted Boltzmann machines and to refine the results on this problem reported in~\cite{montufar2014expressive}. 

Another interesting direction are models where the hidden variables are not conditionally independent given the visible variables, such as deep Boltzmann machines, which involve several layers of hidden variables. 
This case is more challenging, since the free energy does not decompose into independent terms.

\subsection*{Acknowledgments}
We thank Nihat Ay for helpful remarks with the manuscript.

\bibliographystyle{abbrv}
\bibliography{myreferenzen}{}

\appendix

\section{Proofs}
\label{sec:proofs}

\begin{proof}[Proof of Lemma~\ref{proposition:region}]
  Let $B'=B\setminus\{m\}$.
The edge coefficients satisfy 
\begin{equation*}
	K_{B'}(w_{B'},c) = \sum_{C\subseteq B'} (-1)^{|B' \setminus C|} \log\left(1 + \exp\Big(\sum_{i\in C} w_i + c\Big)\right) 
\end{equation*}
and
\begin{equation*}
	K_B(w_B,c) = K_{B'}(w_{B'},c + w_m) - K_{B'}(w_{B'},c) .
\end{equation*} 
Using this structure, we now proceed with the proof of the individual cases. 

\smallskip\noindent\textbf{The case $|B'|=0$.} 
We omit this simple exercise. 

\smallskip\noindent\textbf{The case $|B'|=1$.} 
The \emph{if} statement is as follows. 
The elements of the set $\{0,1\}^B$ are the vertices of the $|B|$-dimensional unit cube. 
We call two vectors $x,x'\in \{0,1\}^B$ adjacent if they differ in exactly one entry, in which case they are the vertices of an edge of the cube. 

The weights $w_B$ and $c$ can be chosen such that the affine map $\{0,1\}^B\to \RR;\; x_B\mapsto w_B^\top x_B + c$ maps any chosen pair of adjacent vectors to any arbitrary values and all other vectors to large negative values. 
The soft-plus function is monotonically increasing, taking value zero at minus infinity and plus infinity at plus infinity. 
Hence, for any $s,s'\in \RR_+$, one finds weights $w$ and $c$ such that 
\begin{equation*}
\phi(x) = 
\left\{ \begin{array}{ll}
s,  & (x_{B'},x_{m}) = (1,\ldots, 1,1)  \\
s', & (x_{B'},x_{m}) = (1,\ldots, 1,0) \\
\approx 0 ,	  & \text{otherwise} 
\end{array}
\right. , 
\end{equation*}
or, alternatively, such that 
\begin{equation*}
\phi(x) = 
\left\{ \begin{array}{ll}
s,  & (x_{B'},x_{m}) = (1,\ldots, 1,0,1)  \\
s', & (x_{B'},x_{m}) = (1,\ldots, 1,0,0) \\
\approx 0 ,	  & \text{otherwise} 
\end{array}
\right. . 
\end{equation*}
This leads to $K_B \approx (s-s')$ and $K_{B'}\approx s'$ or, alternatively, $K_B \approx -(s-s')$ and $K_{B'}\approx -s'$. 
The approximation can be made arbitrarily precise.

The \emph{only if} statement is as follows.  
Denote the soft-plus function by $f\colon \RR\to \RR_+;\; s\mapsto \log(1+\exp(s))$. 
Since $|B'|=1$, $C\subseteq B'$ implies $C=B'$ or $C=\emptyset$. 
We have that $K_{B'}(w_{B'},c) = f(w_{B'} + c) - f(c)$ and $K_{B'}(w_{B'},c+w_m) = f(w_{B'} + c + w_m) - f(c + w_m)$ are either both positive or both negative, depending on the sign of~$w_{B'}$. 
If both are positive, then $K_{B}(w_{B},c) = K_{B'}(w_{B'},c+w_{m}) - K_{B'}(w_{B'},c)\ge - K_{B'}(w_{B'},c)$, and
similarly in the case that both are negative.

\smallskip\noindent\textbf{The case $|B'|=2$.}
The \emph{if} statement follows from the previous case $|B'|=1$. 
Indeed, consider an edge pair $(C,C')$ with an element more than the edge pair $(B,B')$, such that $B = C \setminus\{n\}$ and $B' = C'\setminus\{n\}$. 
Then, for any $w_B$ and~$c$, 
choosing $w_n$ large enough one obtains an arbitrarily accurate approximation 
$K_C((w_{B},w_n), c - w_n) \approx K_B(w_B,c)$ and $K_{C'}((w_{B'}, w_n), c - w_n) \approx K_{B'}(w_{B'},c)$. 

For the \emph{only if} statement we use a similar argument as previously. 
We have $K_{B'}(w_{B'}, c) = f(w_{1}+w_2+c) + f(c) - f(c + w_1) - f(c + w_2)$. 
By convexity of~$f$, this is non-negative if and only if either $w_{1},w_{2}\ge0$ or $w_{1},w_{2}\le 0$.
In other words, this is non-negative if and only if $w_1\cdot w_2\ge 0$. 
Under either of these conditions, $K_{B'}(w_{B'}, c + w_m)$ is also non-negative. 
Similarly, $K_{B'}(w_{B'}, c)$ is non-positive if and only if 
$w_{1}\cdot w_{2}\le 0$.
In this case, $K_{B'}(w_{B'}, c + w_m)$ is also non-positive. 
Now the statement follows as in the case $|B'|=1$. 

\smallskip\noindent\textbf{The case $|B'|=3$.} 
We need to show that any edge pair coefficients can be represented. 
Consider first $J_{B'}\geq 0$. 
We choose weights of the form $w_{B'} = \omega 1_{B'}$, where $\omega\in\mathbb{R}$ and $1_{B'}$ is the vector of $|B'|$ ones. 
Then $K_{B'}(w_{B'},c) = f(3 \omega + c) - 3 f(2 \omega + c) + 3 f(\omega + c) - f(c)$. 
We can choose $\omega$ and $c$ such that $3\omega +c = f^{-1}(J_{B'})$ while $2\omega +c, \omega c, c$ take very large negative values. 
This yields $K_{B'}\approx J_{B'}$. 

Note that the derivative of the soft-plus function is the logistic function, i.e., $f'(s) = 1/(1+\exp(-s))$. 
Choosing $\omega$ large enough from the beginning, 
the function $w_m\mapsto K_{B'}(w_{B'}, c + w_m)$ is monotonically increasing in the interval $w_m\in [0, \omega/2]$ and surpasses the value $\frac{1}{5}\omega$. 
On the other hand, when $w_m$ is large enough, depending on $\omega$ and $c$, 
we have that $2\omega + c + w_m \geq \frac{5}{12} (3\omega + c + w_m)$ and $f(2\omega + c + w_m) \geq \frac{5}{12} f (3\omega + c + w_m)$. 
In this case $f(3\omega + c + w_m) - 3 f(2\omega + c + w_m)\leq -\frac{1}{4}(3\omega + c + w_m)\leq -\frac{1}{4}\omega$. 
At the same time, $\omega + c + w_m$ and $c+ w_m$ are smaller than $-\frac{1}{12}\omega$ and so $f(\omega + c + w_m)$ and $f(c+ w_m)$ are very small in absolute value. 

By the mean value theorem, depending on $w_m$, $K_{B'}(w_{B'}, c + w_m)$ takes any value in the interval $[-\frac15\omega, \frac15\omega]$, where $\omega$ is arbitrarily large. 
In turn, we can obtain $K_B = K_{B'}(w_B',c + w_m) - K_{B'}(w_{B'},c) \approx J_{B}$ for any $J_{B}\in \RR$. 

For $J_{B'}\leq 0$ the proof is analogous after label switching for one variable. 

\smallskip\noindent\textbf{The case $|B'|>3$.} 
This follows from the previous case $|B'|=3$ in the same way that the \emph{if} part of the case $|B'|=2$ follows from the case $|B'|=1$. 
\end{proof}

\end{document}